\newcommand{\Rm}{\mathbb{R}}
\newcommand{\Cm}{\mathbb{C}}
\newcommand{\Nm}{\mathbb{N}}
\newcommand{\ba}{\begin{eqnarray*}}
\newcommand{\ea}{\end{eqnarray*}}
\newcommand{\be}{\begin{equation}}
\newcommand{\ee}{\end{equation}}
\newcommand{\bea}{\begin{eqnarray}}
\newcommand{\eea}{\end{eqnarray}}
\newcommand{\va}{\varphi}
\newcommand{\pp}{\partial}
\newcommand{\vv}[1]{\boldsymbol{\mathrm{#1}}}
\newcommand{\hvv}[1]{\boldsymbol{\hat{\mathrm{#1}}}}
\theoremstyle{plain}
\newtheorem{thm}{Theorem}[section]
\newtheorem{lem}[thm]{Lemma}
\newtheorem{prop}[thm]{Proposition}
\theoremstyle{definition}
\theoremstyle{remark}\newtheorem{rmk}{Remark}
\title[]{Global Lipschitz stability for a fractional inverse transport problem by Carleman estimates}
\author{Atsushi Kawamoto}
\address{Department of Mathematical Sciences, The University of Tokyo, Komaba, Meguro, Tokyo 153-8914, Japan}
\email{kawamo@ms.u-tokyo.ac.jp}
\author{Manabu Machida}
\address{Institute for Medical Photonics Research, Hamamatsu University School of Medicine, Hamamatsu, Shizuoka 431-3192, Japan}
\email{machida@hama-med.ac.jp}
\date{{\small May 8, 2018}}
\begin{document}

\begin{abstract}
We consider a fractional radiative transport equation, where the time 
derivative is of half order in the Caputo sense. By establishing Carleman 
estimates, we prove the global Lipschitz stability in determining the 
coefficients of the one-dimensional time-fractional radiative transport 
equation of half-order.
\end{abstract}

\maketitle

\section{Introduction}

Let us consider the following time fractional radiative transport equation 
with the initial condition and Cauchy data in one dimension.
\be
\left\{\begin{aligned}
\left(\pp_t^{1/2}+v\pp_x+\sigma_t(x,v)\right)u(x,v,t)
&
\\
\qquad=\sigma_s(x,v)\int_Vp(x,v,v')u(x,v',t)\,dv',
&\quad(x,t)\in Q,\quad v\in V,
\\
u(x,v,0)=a(x,v),
&\quad x\in\Omega,\quad v\in V,
\\
u(x,v,t)=g(x,v,t),
&\quad (x,v)\in\Gamma_-,\quad t\in(0,T),
\end{aligned}\right.
\label{rte1}
\ee
where $\pp_t^{1/2}$ is the Caputo fractional derivative \cite{Caputo67} 
of half order given by
\[
\pp_t^{1/2}u(\cdot,\cdot,t)=\frac{1}{\Gamma\left(\frac{1}{2}\right)}
\int_0^t\frac{\pp_{\tau}u(\cdot,\cdot,\tau)}{\sqrt{t-\tau}}\,d\tau.
\]
We note that $\Gamma(\cdot)$ is the gamma function and 
$\Gamma\left(\frac{1}{2}\right)=\sqrt{\pi}$. Here we defined
\[
Q=\{(x,t);\;x\in\Omega,\,0<t<T\},\qquad\Omega=(0,\ell),\qquad
V=\{v\in\mathbb{R};\;v_0\le|v|\le v_1\},
\]
with positive constants $\ell,v_0,v_1$. We define $\Gamma_+$ and 
$\Gamma_-$ by
\[
\Gamma_{\pm}=\{(x,v)\in\pp\Omega\times V;\;
\pm v<0\;\mbox{at}\;x=0,\;\pm v>0\;\mbox{at}\;x=\ell\}.
\]
That is, for a function $f(x,v)$, we have
\ba
\int_{\Gamma_+}f(x,v)\,dSdv&=&
\int_{-v_1}^{-v_0}f(0,v)\,dv+\int_{v_0}^{v_1}f(\ell,v)\,dv,
\\
\int_{\Gamma_-}f(x,v)\,dSdv&=&
\int_{v_0}^{v_1}f(0,v)\,dv+\int_{-v_1}^{-v_0}f(\ell,v)\,dv.
\ea
We assume
\[
\sigma_t\in C^1(\overline{\Omega};L^{\infty}(V)),\qquad
\sigma_s\in C^1(\overline{\Omega};L^{\infty}(V)),
\]
and
\[
p\in C^1(\overline{\Omega};L^{\infty}(V\times V)),\qquad
p>0\quad\mbox{in}\;\Omega\times V\times V.
\]
The phase function $p(x,v,v')$ is assumed to be known, whereas $\sigma_t$, 
$\sigma_s$, or both are unknown.

\begin{rmk}
Anomalous diffusion is said to be subdiffusion when $\alpha\in(0,1)$. In the case of the time-fractional diffusion equation, analysis for $\alpha=n/m$ ($m,n\in\Nm$, $m>n$) is possible once we establish the methodology for $\alpha=1/2$ \cite{Xu-Cheng-Yamamoto11}. Similarly, we can in principle use the general $\alpha$ after we develop in the present paper the analysis for the time-fractional radiative transport equation for $\alpha=1/2$.
\end{rmk}

The time-fractional radiative transport equation is approximated by the time-fractional diffusion equation in the asymptotic limit \cite{Machida17}. Inverse problems for time-fractional diffusion equations with the Caputo derivative $\pp_t^{\alpha}$ have been intensively studied during the last decade. Uniqueness in determining $\alpha$ and the diffusion coefficient was proven \cite{Cheng-etal09}. A Carleman estimate was established for the time-fractional diffusion equation with $\alpha=1/2$ \cite{Xu-Cheng-Yamamoto11}. Using the Carleman estimate technique, conditional stability in determining a zeroth-order coefficient for $\alpha=1/2$ was proven \cite{Yamamoto-Zhang12}. Recovering the absorption coefficient was considered \cite{Jin-Rundell12}. A reconstruction scheme for $\alpha$ was given in \cite{Hatano-etal13}. Simultaneous reconstruction of the initial status and boundary value was considered \cite{Liu-Yamamoto-Yan15}. Recently, unique continuation was proved for arbitrary $\alpha$ \cite{Lin-Nakamura16}.

In this paper, we prove the global Lipschitz stability when determining $\sigma_t(x,v)$ and $\sigma_s(x,v)$ from boundary measurements. The proof is based on Carleman estimates first established in \cite{Carleman39}. The methodology was first used in inverse problems for proving the global uniqueness \cite{Bukhgeim-Klibanov81}. See \cite{Klibanov13} and references therein. Our proof particularly relies on the method developed to show the global Lipschitz stability for the inverse source problem of parabolic equations \cite{Imanuvilov-Yamamoto98}. See a review article \cite{Yamamoto09} for further details. For the usual radiative transport equation with $\pp_t$, the Lipschitz stability was shown for $-T<t<T$ \cite{Klibanov-Pamyatnykh06}, for the purely absorbing case of $\sigma_s\equiv0$ \cite{Gaitan-Ouzzane13}, and for $0<t<T$ \cite{Machida-Yamamoto14}. The recovery of $\sigma_t$ was also considered in \cite{Acosta15}. The exact controllability was proved \cite{Klibanov-Yamamoto07} and the case that $\sigma_t$ depends on $x,v,t$ was considered in \cite{Prilepko-Ivankov84}. See \cite{Bal09} and references therein for the H\"{o}lder-type stability analysis using the albedo operator.

The remainder of this paper is organized as follows. In \S\ref{main}, main results are stated. We give some physical background in \S\ref{atran}. In \S4, we derive a first-order equation in time by multiplying $\pp_t^{1/2}$ by the fractional radiative transport equation in (\ref{rte1}). In \S5, we establish our key Carleman estimate. In \S6, we prove Theorems \ref{thm:bd}, \ref{thm:bdt}, and \ref{thm:bds}. Another Carleman estimate necessary in \S6 is derived in Appendix A.

\section{Main results}
\label{main}

We define
\[
X=H^2(0,T;H^{1,0}(\Omega\times V))\cap L^{\infty}(0,T;H^{2,0}(\Omega\times V)).
\]
For an arbitrarily fixed constant $M>0$, we set
\[
\mathcal{U}=\left\{u\in X;\;
\|u\|_X+\|\pp_xu\|_{H^1(\Omega\times(0,T);L^2(V))}\le M\right\}.
\]
Let $t_0$ be an arbitrarily fixed time on $(0,T)$. We take $\delta>0$ 
such that 
\[
0<t_0-\delta<t_0 < t_0+\delta <T. 
\]
Moreover we set
\[
Q_\delta=\Omega\times (t_0-\delta,t_0+\delta)\quad\mbox{for}\;
0<\delta<\min(t_0,T-t_0).
\]

Let us consider two total attenuations $\sigma_t^{(1)}(x,v)$ and 
$\sigma_t^{(2)}(x,v)$ with $\sigma_t^{(1)}(0,v)=\sigma_t^{(2)}(0,v)$ for all 
$v\in V$, and two scattering coefficients $\sigma_s^{(1)}(x,v)$ and 
$\sigma_s^{(2)}(x,v)$ with $\sigma_s^{(1)}(0,v)=\sigma_s^{(2)}(0,v)$ for all 
$v\in V$. We perform boundary measurements twice for the pairs of initial and 
boundary values, $(a_1,g_1)$ and $(a_2,g_2)$. Let $u_j^{(1)}$ and $u_j^{(2)}$ 
be the corresponding solutions to (\ref{rte1}) for $a_j(x,v)$ and 
$g_j(x,v,t)$ ($j=1,2$). We introduce a $2\times2$ matrix $R(x,v,t)$ as
\[
R(x,v,t)=\left(\begin{array}{cc}
-u_1^{(2)}(x,v,t) & \quad\int_Vp(x,v,v')u_1^{(2)}(x,v',t)\,dv' \\
-u_2^{(2)}(x,v,t) & \quad\int_Vp(x,v,v')u_2^{(2)}(x,v',t)\,dv'
\end{array}\right).
\]
We choose $(a_1,g_1)$ and $(a_2,g_2)$ so that $\det{R(x,v,t_0)}\neq0$ is 
satisfied for a chosen time $t_0\in(0,T)$.

\begin{thm}[Simultaneous determination of $\sigma_t$, $\sigma_s$]
\label{thm:bd}
Let $u_j^{(i)}\in\mathcal{U}$ ($i=1,2$, $j=1,2$), 
$\|\sigma_t^{(i)}\|_{L^{\infty}(\Omega\times V)}\le M$ ($i=1,2$), and 
$\|\sigma_s^{(i)}\|_{L^{\infty}(\Omega\times V)}\le M$ ($i=1,2$). Moreover, we 
suppose $u_j^{(2)}\in C^1(\overline{Q_\delta};L^\infty(V))$, 
$\pp_t^{1/2}u_j^{(2)}\in C^1([t_0-\delta,t_0+\delta];L^\infty(\Omega\times V))$ for $j=1,2$.
We assume that $\det{R(\cdot,\cdot,t_0)}\neq0$ in $\overline{\Omega\times V}$. 
Then there exists $C=C(t_0,\delta,M)>0$ such that
\ba
&&
\|\sigma_t^{(1)}-\sigma_t^{(2)}\|_{H^1(\Omega;L^2(V))}^2+
\|\sigma_s^{(1)}-\sigma_s^{(2)}\|_{H^1(\Omega;L^2(V))}^2
\\
&&\le
C\sum_{j=1}^2\left\|u_j^{(1)}(\cdot,\cdot,t_0)-u_j^{(2)}(\cdot,\cdot,t_0)
\right\|_{H^2(\Omega;L^2(V))}^2
\\
&&+
C\sum_{j=1}^2\int_{t_0-\delta}^{t_0+\delta}\int_{\Gamma_+}
\left[\left|\pp_t(u_j^{(1)}-u_j^{(2)})\right|^2+
\left|\pp_t^2(u_j^{(1)}-u_j^{(2)})\right|^2+
\left|\pp_t\pp_x(u_j^{(1)}-u_j^{(2)})\right|^2\right]\,dSdvdt
\\
&&+
C\sum_{j=1}^2\int_{t_0-\delta}^{t_0+\delta}\int_V
\left|\pp_x\pp_t\left(u_j^{(1)}(0,v,t)-u_j^{(2)}(0,v,t)\right)\right|^2\,dvdt,
\ea
where $0<\delta<\min(t_0,T-t_0)$. Here, $C(t_0,\delta,M)\to\infty$ 
as $M\to\infty$.
\end{thm}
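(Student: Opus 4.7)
The plan is to follow the Bukhgeim--Klibanov strategy, adapted to the half-order fractional transport setting. First I would introduce the differences $y_j := u_j^{(1)}-u_j^{(2)}$, $f(x,v) := \sigma_t^{(1)}-\sigma_t^{(2)}$, $h(x,v) := \sigma_s^{(1)}-\sigma_s^{(2)}$, together with the scattering operator $(Ku)(x,v,t) := \int_V p(x,v,v')\,u(x,v',t)\,dv'$. Subtracting the two equations, $y_j$ satisfies
\[
\left(\pp_t^{1/2}+v\pp_x+\sigma_t^{(1)}\right)y_j - \sigma_s^{(1)}Ky_j = -f(x,v)\,u_j^{(2)} + h(x,v)\,Ku_j^{(2)},
\]
with $y_j|_{t=0}=0$ and $y_j|_{\Gamma_-}=0$. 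Evaluating this identity at $t=t_0$ gives, for each $(x,v)$, a linear $2\times 2$ system in $(f,h)$ whose coefficient matrix is precisely $R(x,v,t_0)$. Inverting, and then differentiating once in $x$ (using that $f(0,v)=h(0,v)=0$ to avoid spurious traces), one obtains the pointwise bound
\[
|f|^2 + |h|^2 + |\pp_x f|^2 + |\pp_x h|^2 \;\le\; C\sum_{j=1}^{2}\bigl[\,|y_j(t_0)|^2 + |\pp_x y_j(t_0)|^2 + |\pp_x^2 y_j(t_0)|^2 + |\pp_t^{1/2}y_j(t_0)|^2 + |\pp_x\pp_t^{1/2}y_j(t_0)|^2\,\bigr].
\]
After integration over $\Omega\times V$, the first three contributions reproduce exactly the $H^2(\Omega;L^2(V))$-norm of $y_j(\cdot,\cdot,t_0)$ appearing in the conclusion; the two nonlocal traces must be handled by Carleman techniques.

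For the Carleman step I would differentiate the equation for $y_j$ in time, invoking the regularity $\pp_t^{1/2}u_j^{(2)}\in C^1([0,T];L^\infty(\Omega\times V))$ together with $y_j(\cdot,\cdot,0)=0$ to justify the commutation $\pp_t\pp_t^{1/2}=\pp_t^{1/2}\pp_t$. Then $z_j:=\pp_t y_j$ and $w_j:=\pp_t^2 y_j$ satisfy transport equations of the same form as $y_j$ with sources $-f\,\pp_t u_j^{(2)}+h\,K\pp_t u_j^{(2)}$ and $-f\,\pp_t^2 u_j^{(2)}+h\,K\pp_t^2 u_j^{(2)}$, respectively. I would then apply a Carleman estimate for the fractional transport operator $\pp_t^{1/2}+v\pp_x$, in the spirit of \cite{Xu-Cheng-Yamamoto11} and \cite{Machida-Yamamoto14}, with a weight $\varphi(x,t)$ peaking at $t=t_0$ and satisfying the pseudoconvexity condition with respect to the characteristic direction $v\pp_x$ on $(t_0-\delta,t_0+\delta)$. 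Applied to $z_j$, $\pp_x z_j$ and $w_j$, it yields
\[
s\int_{t_0-\delta}^{t_0+\delta}\!\!\int_V\!\!\int_\Omega e^{2s\varphi}\bigl[|z_j|^2+|\pp_x z_j|^2+|w_j|^2\bigr]\,dxdvdt \;\le\; C\!\int_{t_0-\delta}^{t_0+\delta}\!\!\int_V\!\!\int_\Omega e^{2s\varphi}\bigl[|f|^2+|h|^2\bigr]\,dxdvdt + B,
\]
where $B$ collects boundary contributions on $\Gamma_+$ in $|\pp_t y_j|^2,|\pp_t^2 y_j|^2,|\pp_t\pp_x y_j|^2$ and the $x=0$ trace of $|\pp_x\pp_t y_j|^2$, i.e.\ precisely the data terms in the statement.

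To close the loop, I would represent the nonlocal pieces through the Caputo formula
\[
\pp_t^{1/2}y_j(\cdot,\cdot,t_0) = \frac{1}{\Gamma(1/2)}\int_0^{t_0}\frac{\pp_\tau y_j(\cdot,\cdot,\tau)}{\sqrt{t_0-\tau}}\,d\tau
\]
(and analogously for $\pp_x\pp_t^{1/2}y_j(\cdot,\cdot,t_0)$), dominating them by weighted $L^2$ norms of $z_j$ and $\pp_x z_j$ which are in turn controlled by the left-hand side of the Carleman estimate. Substituting the pointwise inequality for $(f,h)$ into the right-hand side of the Carleman bound and choosing $s$ large enough that the factor $e^{2s\varphi(\cdot,t_0)}$ available on the left dominates the $\int\int\int e^{2s\varphi}(|f|^2+|h|^2)$ term on the right, the $f,h$ contributions absorb into the left-hand side. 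What remains is the $H^2(\Omega;L^2(V))$-norm of $y_j(\cdot,\cdot,t_0)$ together with the $\Gamma_+$ and $\{x=0\}$ traces appearing in the theorem.

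The main obstacle is the fractional derivative itself, which enters non-locally at three distinct places: the commutation $\pp_t\pp_t^{1/2}=\pp_t^{1/2}\pp_t$ must be justified (hence the $C^1$-in-time hypotheses on $u_j^{(2)}$ and $\pp_t^{1/2}u_j^{(2)}$); the pointwise recovery at $t_0$ produces $\pp_t^{1/2}y_j(t_0)$ and its $x$-derivative, which are \emph{not} measured data and must be traded for Carleman-weighted time integrals via the representation formula; and the Carleman weight must simultaneously be compatible with the first-order characteristic term $v\pp_x$ and with the half-derivative $\pp_t^{1/2}$, forcing the construction of \cite{Xu-Cheng-Yamamoto11} in place of classical parabolic weights. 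The hypothesis $\det R(x,v,t_0)\neq 0$ is precisely the mechanism that decouples $f$ and $h$, turning a single vector-valued Carleman bound into simultaneous Lipschitz control of both unknown coefficients.
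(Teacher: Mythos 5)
Your outline has the right Bukhgeim--Klibanov skeleton (subtract, identify the $2\times2$ system with matrix $R$, Carleman estimate, absorption for large $s$), but it is missing the step that makes the paper work and rests on a Carleman estimate that is not available. You propose to ``apply a Carleman estimate for the fractional transport operator $\pp_t^{1/2}+v\pp_x$, in the spirit of \cite{Xu-Cheng-Yamamoto11} and \cite{Machida-Yamamoto14}.'' Neither reference supplies such an estimate: \cite{Machida-Yamamoto14} treats the integer-order operator $\pp_t+v\pp_x$, and \cite{Xu-Cheng-Yamamoto11} obtains its half-order result precisely by composing $\pp_t^{1/2}$ with itself to land on a parabolic operator. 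The paper does the same: after introducing the corrector $\hvv{u}=\vv{u}-\frac{2t^{1/2}}{\Gamma(1/2)}R(x,v,0)\vv{r}$ so that $\hvv{u}(0)=\pp_t^{1/2}\hvv{u}(0)=0$ (the hypotheses of the composition rule $\pp_t^{1/2}\pp_t^{1/2}=\pp_t$), it applies $\pp_t^{1/2}$ to the equation and arrives at $\pp_t\vv{u}-v^2\pp_x^2\vv{u}=\dots$, a first-order-in-time, \emph{second-order-in-space} equation for which a parabolic-type Carleman estimate with the Fursikov--Imanuvilov weight can actually be proved (Proposition \ref{prop:bd}). Without this reduction your Carleman step is an unproved assertion, and it is the core technical content of the theorem.

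The second gap is in your treatment of the nonlocal traces. Your pointwise inversion of $R(x,v,t_0)$ leaves $\pp_t^{1/2}y_j(\cdot,\cdot,t_0)$ and $\pp_x\pp_t^{1/2}y_j(\cdot,\cdot,t_0)$ on the right, and you propose to dominate them by Carleman-weighted integrals of $\pp_t y_j$ over $(t_0-\delta,t_0+\delta)$ via the Caputo formula. But that formula integrates $\pp_\tau y_j$ over all of $(0,t_0)$, while the Carleman estimate only controls the weighted integral on $(t_0-\delta,t_0+\delta)$, and the weight $e^{2s\varphi}$ degenerates to zero at the endpoints; the portion of the memory integral over $(0,t_0-\delta)$ is simply not controlled, and on $(t_0-\delta,t_0)$ you would have to divide by an exponentially small weight. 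This does not close. The paper never faces this problem: after the half-order reduction the only remaining trace of $\pp_t^{1/2}$ sits inside the \emph{known} coefficient $\pp_t^{1/2}R$ in the source $\vv{f}$, and the coefficients $(r_t,r_s)$ are recovered from $\vv{f}(\cdot,\cdot,t_0)$ not by pointwise algebraic inversion followed by differentiation, but by reading $\vv{f}=-vR\pp_x\vv{r}+(\text{zeroth order in }\vv{r})$ as a first-order ODE system in $x$ with $\vv{r}(0,v)=0$ and applying the elementary weighted estimate of Lemma \ref{lemmace1}, which delivers $|\pp_x\vv{r}|^2+s^2|\vv{r}|^2$ on the left in one stroke. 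You would need to replace your steps two and three by this mechanism for the argument to go through.
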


If one of the coefficients is known, we can determine $\sigma_t$ or 
$\sigma_s$ from a single measurement. The following theorems can be proved 
similar to Theorem \ref{thm:bd}.

\begin{thm}[Determination of $\sigma_t$]
\label{thm:bdt}
Let $u^{(i)}\in\mathcal{U}$ ($i=1,2$), 
$\|\sigma_t^{(i)}\|_{L^{\infty}(\Omega\times V)}\le M$ ($i=1,2$). Moreover we 
suppose $u^{(2)}\in C^1(\overline{Q_\delta};L^\infty(V))$, 
$\pp_t^{1/2}u^{(2)}\in C^1([t_0-\delta,t_0+\delta];L^\infty(\Omega\times V))$, and 
$u^{(2)}(\cdot,\cdot,t_0)\neq 0$ in $\overline{\Omega\times V}$. Then 
there exists $C=C(t_0,\delta,M)>0$ such that
\ba
&&
\|\sigma_t^{(1)}-\sigma_t^{(2)}\|_{H^1(\Omega;L^2(V))}^2
\\
&&\le
C\left\|u^{(1)}(\cdot,\cdot,t_0)-u^{(2)}(\cdot,\cdot,t_0)
\right\|_{H^2(\Omega;L^2(V))}^2
\\
&&+
C\int_{t_0-\delta}^{t_0+\delta}\int_{\Gamma_+}
\left[\left|\pp_t(u^{(1)}-u^{(2)})\right|^2+
\left|\pp_t^2(u^{(1)}-u^{(2)})\right|^2+
\left|\pp_t\pp_x(u^{(1)}-u^{(2)})\right|^2\right]\,dSdvdt
\\
&&+
C\int_{t_0-\delta}^{t_0+\delta}\int_V
\left|\pp_x\pp_t\left(u^{(1)}(0,v,t)-u^{(2)}(0,v,t)\right)\right|^2\,dvdt,
\ea
where $0<\delta<\min(t_0,T-t_0)$. Here, $C(t_0,\delta,M)\to\infty$ 
as $M\to\infty$.
\end{thm}

\begin{thm}[Determination of $\sigma_s$]
\label{thm:bds}
Let $u^{(i)}\in\mathcal{U}$ ($i=1,2$), 
$\|\sigma_s^{(i)}\|_{L^{\infty}(\Omega\times V)}\le M$ ($i=1,2$). Moreover we 
suppose $u^{(2)}\in C^1(\overline{Q_\delta};L^\infty(V))$, 
$\pp_t^{1/2}u^{(2)}\in C^1([t_0-\delta,t_0+\delta];L^\infty(\Omega\times V))$, and 
$\int_V p(\cdot, \cdot , v')u^{(2)}(\cdot,v',t_0)\,dv'\neq 0$ in 
$\overline{\Omega\times V}$. Then there exists $C=C(t_0,\delta,M)>0$ such that
\ba
&&
\|\sigma_s^{(1)}-\sigma_s^{(2)}\|_{H^1(\Omega;L^2(V))}^2
\\
&&\le
C\left\|u^{(1)}(\cdot,\cdot,t_0)-u^{(2)}(\cdot,\cdot,t_0)
\right\|_{H^2(\Omega;L^2(V))}^2
\\
&&+
C\int_{t_0-\delta}^{t_0+\delta}\int_{\Gamma_+}
\left[\left|\pp_t(u^{(1)}-u^{(2)})\right|^2+
\left|\pp_t^2(u^{(1)}-u^{(2)})\right|^2+
\left|\pp_t\pp_x(u^{(1)}-u^{(2)})\right|^2\right]\,dSdvdt
\\
&&+
C\int_{t_0-\delta}^{t_0+\delta}\int_V
\left|\pp_x\pp_t\left(u^{(1)}(0,v,t)-u^{(2)}(0,v,t)\right)\right|^2\,dvdt,
\ea
where $0<\delta<\min(t_0,T-t_0)$. Here, $C(t_0,\delta,M)\to\infty$ 
as $M\to\infty$.
\end{thm}

\begin{rmk}
\label{apriori}
In Theorem \ref{thm:bd}, we need an a priori assumption $\det R(\cdot,\cdot, t_0)\neq 0$ in $\overline{\Omega\times V}$ at the observation time $t=t_0$. This nonzero condition is satisfied by the appropriate choice of $(a_j, g_j)$ for $j=1,2$. The controllability result for \eqref{rte1} about how to choose $(a_j, g_j)$ for $j=1,2$ is not yet known but obtained along the same lines of the calculation (in particular, Proposition 1.1) by Yuan and Yamamoto \cite{Yuan-Yamamoto09}, which is concerned with a parabolic equation. See also Remark 1.3 in Machida and Yamamoto \cite{Machida-Yamamoto14} for the radiative transport equation.
\end{rmk}

\section{Anomalous transport}
\label{atran}

\subsection{Relation to anomalous diffusion and anomalous transport}

Anomalous diffusion is often studied using fractional diffusion equations \cite{Metzler-Klafter00,Sokolov-etal00}. In particular, anomalous diffusion is observed for tracer particles moving in an aquifer \cite{Adams-Gelhar92}. An analysis of column experiments revealed a power-law behavior of the waiting-time function of the continuous-time random walk \cite{Hatano-Hatano98}, which has motivated the use of the fractional diffusion equations. However, recent study shows that such fractional diffusion equations fail to explain the flow of tracer particles in column experiments especially during short time periods \cite{Yamakawa12}. When considering the fact that the time-fractional diffusion equation is obtained in the asymptotic limit of the time-fractional radiative transport equation for long time and large distance \cite{Machida17}, our attention is driven to the study of the latter equation as a more accurate model of anomalous transport.

It is known that the mass distribution of tracer particles moving in an aquifer reveals non-Gaussian behavior \cite{Adams-Gelhar92} and the linear Boltzmann transport has been proposed \cite{Williams92,Williams93}. Theorem \ref{thm:bd} guarantees the global Lipschitz stability in determining the absorption and scattering properties of the area of interest when the concentration of tracer particles is measured with pumping wells surrounding the area. Also Theorem \ref{thm:bd} might be related to optical tomography \cite{Arridge-Schotland09}, in which optical properties of absorption and scattering are determined from boundary measurements, if propagation of light for some reason shows anomalous transport.

\subsection{Continuous-time random walk}
\label{ctrw}

The fractional diffusion equation is derived from the continuous-time random walk. In the same manner, the fractional radiative transport equation is derived from the continuous-time random walk with velocity.

We begin with the usual continuous-time random walk in $x\in\Rm$, $t\ge0$. Let $\va(x,t)$ be the jump probability density function given by $\va(x,t)=\lambda(x)w(t)$, where $\lambda(x)$ is the jump length probability density function and $w(t)$ is the waiting time probability density function. They are calculated as $\lambda(x)=\int_0^{\infty}\va(x,t)\,dt$, $w(t)=\int_{-\infty}^{\infty}\va(x,t)\,dx$. Using $\va(x,t)$, the probability density function $\eta(x,t)$ of just having arrived at position $x$ at time $t$ is written as
\[
\eta(x,t)=
\int_0^t\int_{-\infty}^{\infty}\eta(y,s)\va(x-y,t-s)\,dyds+a(x)\delta(t),
\]
where $a(x)$ is the initial value. We note that the cumulative probability $\Phi(t)$ of not having moved during $t$ is given by
\be
\Phi(t)=1-\int_0^tw(s)\,ds.
\label{ctrw:Phi}
\ee
Thus the probability density function $P(x,t)$ of being at $(x,t)\in\Rm\times[0,\infty)$ is obtained as $P(x,t)=\int_0^t\eta(x,s)\Phi(t-s)\,ds$. Suppose that the Fourier transform of $\lambda(x)$ behaves like $(\mathcal{F}\lambda)(k)\sim1-\sigma^2k^2$ for small $k$ and the Laplace transform of $w(t)$ behaves like $(\mathcal{L}w)(s)\sim1-(\tau s)^{\alpha}$ for small $s$ ($0<\alpha\le1$). Then it is known that $P(x,t)$ asymptotically obeys the following diffusion equation ($\alpha=1$) or time-fractional diffusion equation ($0<\alpha<1$) in the limit of large $x$ and large $t$ (see, for example, \cite{Metzler-Klafter00}).
\[
\pp_t^{\alpha}P-\frac{\sigma^2}{\tau^{\alpha}}\pp_x^2P=0.
\]

Now, we generalize $\lambda$ taking velocity into account \cite{Machida17}. Absorption is also considered. We give $\lambda(x;v,v')$ as
\[
\lambda(x;v,v')=
\xi_s\delta(x)p(v,v')+\left(1-\xi_t\right)\delta(x-v\tau_0)\delta(v-v'),
\]
where $\xi_t\in(0,1)$, $\xi_s\in(0,\xi_t)$, and $\tau_0>0$ are constants. We will give $\tau_0$ below depending on $w(t),\xi_t$. The first term on the right-hand side is the probability that there is no jump but the velocity changes from $v'$ to $v$. The function $p(v,v')$ is the probability that the target particle changes its velocity from $v'$ to $v$ when it is scattered by a scatterer. The second term shows the probability of transport that the target particle jumps keeping its velocity without being scattered nor absorbed. Correspondingly we give $\va(x,t;v,v')$ as $\va(x,t;v,v')=\lambda(x;v,v')w(t)$, with the relations $\lambda(x;v,v')=\int_0^{\infty}\va(x,t;v,v')\,dt$, $\left(1-\xi_a\right)w(t)=
\int_V\int_{-\infty}^{\infty}\va(x,t;v,v')\,dxdv'$, where we introduced the probability $\xi_a=\xi_t-\xi_s>0$ for absorption. Then we have
\[
\eta(x,v,t)=\int_0^t\int_V\int_{-\infty}^{\infty}\eta(y,v',s)
\va(x-y,t-s;v,v')\,dydv'ds+a(x,v)\delta(t).
\]
With this $\eta(x,v,t)$, the probability density function $P(x,v,t)$ of being at $(x,v,t)\in\Rm\times V\times[0,\infty)$ is given by $P(x,v,t)=\int_0^t\eta(x,v,t)\Phi(t-s)\,ds$, where $\Phi$ is introduced in (\ref{ctrw:Phi}). In the asymptotic limit of small $k,s$, we obtain
\[
\left(\pp_t^{\alpha}+v\pp_x+\sigma_t\right)P(x,v,t)=
\sigma_s\int_Vp(v,v')P(x,v',t)\,dv',
\]
where $\sigma_t=\xi_t/\tau^{\alpha}$, $\sigma_s=\xi_s/\tau^{\alpha}$, $\tau_0=\tau^{\alpha}/(1-\xi_t)$. Thus we see that (\ref{rte1}) is related to the continuous-time random walk with velocity. Furthermore it can be shown that (\ref{rte1}) reduces to the diffusion equation with the absorption term in the asymptotic limit \cite{Machida17}. In this sense, (\ref{rte1}) governs anomalous transport at the mesoscopic scale, whereas the governing equation is the fractional diffusion equation at the macroscopic scale.

\section{From one-half to one}
\label{half2one}

Since we have no Carleman estimates for time-fractional radiative transport 
equations, we begin by obtaining an equation with the time derivative 
$\pp_t$ by taking the $t$-derivative of half-order in the original equation. 
The following lemma ensures the relation 
$\pp_t^{1/2}\pp_t^{1/2}=\pp_t$ in the calculation developed in this section.

\begin{lem}[Xu-Cheng-Yamamoto \cite{Xu-Cheng-Yamamoto11}]
Let $\tilde{u}\in C[0,T]\cap W^{1,1}(0,T)$ and
\[
\tilde{u}(0)=\pp_t^{\alpha_2}\tilde{u}(0)=0.
\]
Then for $0<\alpha_1+\alpha_2\le 1$,
\[
\pp_t^{\alpha_1}\pp_t^{\alpha_2}\tilde{u}(t)=
\pp_t^{\alpha_1+\alpha_2}\tilde{u}(t).
\]
\end{lem}

Let us consider differences,
\[
r_t(x,v)=\sigma_t^{(1)}(x,v)-\sigma_t^{(2)}(x,v),\qquad
r_s(x,v)=\sigma_s^{(1)}(x,v)-\sigma_s^{(2)}(x,v),
\]
where $r_t(x,v),r_s(x,v)\in C^1(\Omega;L^{\infty}(V))$ with $r_t(0,v)=r_s(0,v)=0$ for $v\in V$. We define vectors $\vv{r},\vv{u}$ as
\[
\vv{r}(x,v)=\left(\begin{array}{c}
r_t(x,v) \\ r_s(x,v)
\end{array}\right),\qquad
\vv{u}(x,v,t)=\left(\begin{array}{c}
u_1^{(1)}(x,v,t)-u_1^{(2)}(x,v,t) \\ u_2^{(1)}(x,v,t)-u_2^{(2)}(x,v,t)
\end{array}\right).
\]
Similar to Yamamoto and Zhang \cite{Yamamoto-Zhang12}, we introduce 
a new vector $\hvv{u}(x,v,t)$ as
\[
\hvv{u}(x,v,t)=\vv{u}(x,v,t)-
\frac{2t^{1/2}}{\Gamma\left(\frac{1}{2}\right)}R(x,v,0)\vv{r}(x,v).
\]
By differentiating both sides 
of the above equation with respect to $t$, we obtain $\pp_t\hvv{u}$ as
\be
\pp_t\hvv{u}(x,v,t)=\pp_t\vv{u}(x,v,t)-
\frac{1}{\Gamma\left(\frac{1}{2}\right)t^{1/2}}
R(x,v,0)\vv{r}(x,v).
\label{ppthatu1}
\ee
We note that
\[
\hvv{u}(x,v,0)=0.
\]
Using $\pp_t^{1/2}t^{1/2}=\frac{1}{2}\Gamma(1/2)$, we obtain
\[
\pp_t^{1/2}\hvv{u}(x,v,t)=\pp_t^{1/2}\vv{u}(x,v,t)-R(x,v,0)\vv{r}(x,v).
\]
The above equation implies
\[
\pp_t^{1/2}\hvv{u}(x,v,0)=0.
\]
We note that by writing $\sigma_t^{(1)}$, $\sigma_s^{(1)}$ as $\sigma_t$, $\sigma_s$, we obtain the following time-fractional radiative transport equation.
\be
\left\{\begin{aligned}
\left(\pp_t^{1/2}+v\pp_x+\sigma_t(x,v)\right)\vv{u}(x,v,t)
=\sigma_s(x,v)\int_Vp(x,v,v')\vv{u}(x,v',t)\,dv'
\\
\qquad+R(x,v,t)\vv{r}(x,v),\quad (x,t)\in Q,\quad v\in V,
\\
\vv{u}(x,v,0)=\vv{0},
\quad x\in\Omega,\quad v\in V,
\\
\vv{u}(x,v,t)=\vv{0},
\quad(x,v)\in\Gamma_-,\quad t\in(0,T).
\end{aligned}\right.
\label{rte2}
\ee
Now we can alternatively compute $\pp_t\hvv{u}$ as follows.
\bea
&&
\pp_t\hvv{u}
=\pp_t^{1/2}\pp_t^{1/2}\hvv{u}
\nonumber \\
&&=
\pp_t^{1/2}\left(-v\pp_x\vv{u}-\sigma_t\vv{u}+\sigma_s\int_Vp\vv{u}\,dv'+
R\vv{r}\right)
\nonumber \\
&&=
-v\pp_x\left(-v\pp_x\vv{u}-\sigma_t\vv{u}+\sigma_s\int_Vp\vv{u}\,dv'+
R\vv{r}\right)
\nonumber \\
&&-
\sigma_t\left(-v\pp_x\vv{u}-\sigma_t\vv{u}+\sigma_s\int_Vp\vv{u}\,dv'+
R\vv{r}\right)
\nonumber \\
&&+
\sigma_s\int_Vp\Bigl(-v'\pp_x\vv{u}(x,v',t)-\sigma_t(x,v')\vv{u}(x,v',t)
\nonumber \\
&&+
\sigma_s(x,v')\int_Vp\vv{u}\,dv''+R(x,v',t)\vv{r}(x,v')\Bigr)\,dv'
+\left(\pp_t^{1/2}R\right)\vv{r}
\nonumber \\
&&=
v^2\pp_x^2\vv{u}-vR\pp_x\vv{r}-\left(v\pp_xR+\sigma_t R-\pp_t^{1/2}R\right)\vv{r}
\nonumber \\
&&+
2v\sigma_t\pp_x\vv{u}+\left(v(\pp_x\sigma_t)+\sigma_t^2\right)\vv{u}
-\left(v\pp_x+\sigma_t\right)\sigma_s\int_Vp(x,v,v')\vv{u}(x,v',t)\,dv'
\nonumber \\
&&+
\sigma_s\int_Vp\Bigl(-v'\pp_x\vv{u}(x,v',t)-\sigma_t(x,v')\vv{u}(x,v',t)
\nonumber \\
&&+
\sigma_s\int_Vp\vv{u}\,dv''+R(x,v',t)\vv{r}(x,v')\Bigr)\,dv'.
\label{ppthatu2}
\eea
From (\ref{ppthatu1}) and (\ref{ppthatu2}), we arrive at the following 
equation.
\be
\left\{\begin{aligned}
\pp_t\vv{u}(x,v,t)-v^2\pp_x^2\vv{u}-L_1\vv{u}(x,v,t)=
\int_VK(x,v,v')\vv{u}(x,v',t)\,dv'
\\
\qquad+\vv{f}(x,v,t),
\quad (x,t)\in Q,\quad v\in V,
\\
\vv{u}(x,v,0)=\vv{0},
\quad x\in\Omega,\quad v\in V,
\\
\vv{u}(x,v,t)=\vv{0},
\quad(x,v)\in\Gamma_-,\quad t\in(0,T).
\end{aligned}\right.
\label{rte3}
\ee
Here,
\ba
L_1\vv{u}(x,v,t)
&=&
2v\sigma_t(x,v)\pp_x\vv{u}(x,v,t)+
\left(v\pp_x\sigma_t(x,v)+\sigma_t^2(x,v)\right)\vv{u}(x,v,t),
\\
K(x,v,v')
&=&
-v\pp_x\left(\sigma_s(x,v)p(x,v,v')\right)
\\
&-&
\sigma_s(x,v)p(x,v,v')\left((v+v')\pp_x+\sigma_t(x,v)+\sigma_t(x,v')\right)
\\
&+&
\sigma_s(x,v)\int_V\sigma_s(x,v'')p(x,v,v'')p(x,v'',v')\,dv'',
\ea
and
\bea
&&
\vv{f}(x,v,t)
=
-vR(x,v,t)\pp_x\vv{r}(x,v)
\nonumber \\
&&-
\left[v\pp_xR(x,v,t)+\sigma_t(x,v)R(x,v,t)-\pp_t^{1/2}R(x,v,t)-
\frac{1}{\Gamma\left(\frac{1}{2}\right)t^{1/2}}R(x,v,0)\right]\vv{r}(x,v)
\nonumber \\
&&+
\sigma_s(x,v)\int_Vp(x,v,v')R(x,v',t)\vv{r}(x,v')\,dv'.
\label{defoff}
\eea

\begin{rmk}
Our argument only works in one dimension. In the multi-dimensional case ($n>1$), the principal coefficients $v_iv_j$ ($i,j=1,\dots,n$) of the parabolic equation corresponding to \eqref{rte3} do not satisfy the uniform ellipticity for $V=\{v\in\mathbb{R}^n;\;v_0\le|v|\le v_1\}$, and we can not derive the Carleman estimate, which is obtained below for the one-dimensional equation. 
\end{rmk}

\section{Carleman estimate}

Hereafter in this paper, we let $C$ denote generic positive constants. 
Let us look at one component of the vector equation (\ref{rte3}) and 
consider the following equation.
\be
\left\{\begin{aligned}
L_0u(x,v,t)-L_1u(x,v,t)-\int_VK(x,v,v')u(x,v',t)\,dv'
=f(x,v,t),
\\
\qquad(x,t)\in Q,\quad v\in V,
\\
u(x,v,0)=0,
\quad x\in\Omega,\quad v\in V,
\\
u(x,v,t)=0,
\quad(x,v)\in\Gamma_-,\quad t\in(0,T),
\end{aligned}\right.
\label{rte4}
\ee
where
\[
L_0u(x,v,t)=\pp_tu(x,v,t)-v^2\pp_x^2u(x,v,t).
\]

Let $d\in C^2(\overline{\Omega})$ be a function such that
\[
d(x)>0\quad\mbox{for}\quad x\in\Omega,\qquad
\pp_x d(x)<0\quad\mbox{for}\quad x\in\overline{\Omega}.
\]

As was done in \cite{Emanuilov95,Fursikov-Imanuvilov96,Imanuvilov-Yamamoto98,Yamamoto09}, we use the weight function $\alpha$ as
\be
\alpha(x,t)=
\frac{e^{\lambda d(x)}-e^{2\lambda\|d\|_{C(\overline{\Omega})}}}{t(T-t)}.
\label{weightalpha}
\ee
We define
\[
\va(x,t)=\frac{e^{\lambda d(x)}}{t(T-t)}.
\]
We set
\[
z(x,v,t)=e^{s\alpha(x,t)}u(x,v,t).
\]
We note that $\alpha<0$ in $\Omega\times(0,T)$, and
\[
z(x,v,0)=z(x,v,T)=0,\qquad\pp_xz(x,v,0)=\pp_xz(x,v,T)=0,
\]
for $(x,v)\in\Omega\times V$.

\begin{prop}[Carleman estimate]
\label{prop:bd}
There exists $\lambda_0>0$ such that for arbitrary $\lambda>\lambda_0$, we can choose $s_0=s_0(\lambda)>0$ and there exists $C=C(s_0,\lambda_0)>0$ such that the following estimate holds for all $s>s_0$ and all $u\in\mathcal{U}$ which satisfies (\ref{rte4}). 
\bea
&&
\int_{Q\times V}\left[\frac{1}{s\va}|\pp_tu|^2+
s\lambda^2\va|\pp_xu|^2+s^3\lambda^4\va^3|u|^2\right]e^{2s\alpha}\,dxdvdt
\nonumber \\
&&\le
C\int_{Q\times V}|f|^2e^{2s\alpha}\,dxdvdt
+Ce^{C(\lambda)s}\int_0^T\int_{\Gamma_+}\left(|u|^2+|\pp_tu|^2+|\pp_xu|^2
\right)\,dSdvdt
\nonumber \\
&&+
Ce^{C(\lambda)s}\int_V\int_0^T|\pp_x u(0,v,t)|^2\,dtdv.
\label{prop:bd:eq1}
\eea
\end{prop}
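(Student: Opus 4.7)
The plan is to reduce Proposition \ref{prop:bd} to the parabolic Carleman estimate for the principal operator $L_0=\pp_t-v^2\pp_x^2$ (which is the auxiliary estimate to be established in Appendix A), and then absorb the transport-type lower-order operator $L_1$ and the nonlocal scattering integral into the left-hand side by taking the Carleman parameters $s,\lambda$ sufficiently large. Concretely, for each fixed $v\in V$ I would view $L_0$ as a one-dimensional parabolic operator whose diffusion coefficient $v^2$ is bounded uniformly from below by $v_0^2>0$ and from above by $v_1^2$, so the constants produced by the Appendix A estimate are independent of $v$.

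Applying that parabolic Carleman estimate pointwise in $v$ and then integrating over $v\in V$ yields an inequality of the claimed form but with $|L_0u|^2e^{2s\alpha}$ on the right. Substituting $L_0u=L_1u+\int_VK(x,v,v')u(x,v',t)\,dv'+f$ from (\ref{rte4}), the $C^1$-regularity of $\sigma_t$ yields $|L_1u|^2\le C(|\pp_xu|^2+|u|^2)$, which is dominated by $s\lambda^2\va|\pp_xu|^2e^{2s\alpha}$ and $s^3\lambda^4\va^3|u|^2e^{2s\alpha}$ for $s,\lambda$ large and is thus absorbed on the left. The nonlocal term is bounded by Cauchy--Schwarz in $v'$:
\[
\left|\int_V K(x,v,v')u(x,v',t)\,dv'\right|^2
\le C\int_V\left(|u(x,v',t)|^2+|\pp_xu(x,v',t)|^2\right)dv';
\]
since $e^{2s\alpha(x,t)}$ does not depend on $v$, Fubini converts this into $C|V|\int_{Q\times V}(|u|^2+|\pp_xu|^2)e^{2s\alpha}\,dxdv'dt$, which is absorbed in the same manner.

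It remains to dispose of the boundary terms. The Dirichlet condition $u\equiv0$ on $\Gamma_-$ annihilates the contributions at the incoming part of $\pp\Omega\times V$, so the only Dirichlet and time-derivative traces left sit on $\Gamma_+$; replacing $e^{2s\alpha}$ on the compact set $\overline{\Gamma_+}\times[0,T]$ by the crude upper bound $e^{C(\lambda)s}$ produces $Ce^{C(\lambda)s}\int_0^T\int_{\Gamma_+}(|u|^2+|\pp_tu|^2+|\pp_xu|^2)\,dSdvdt$. The normal derivative at $x=0$ that arises from the integration-by-parts step in the Appendix A estimate cannot be discarded for the velocities $v$ with $(0,v)\in\Gamma_-$, since no outgoing information is available on that portion of the boundary; keeping the full Neumann trace at $x=0$ over all $v\in V$ is exactly what forces the additional observation term $Ce^{C(\lambda)s}\int_V\int_0^T|\pp_xu(0,v,t)|^2\,dtdv$ on the right.

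The main difficulty is the nonlocal scattering integral: unlike the purely local parabolic setting, the coupling across $v'$ forces one to interchange the $v$- and $v'$-integrations and to exploit the $v$-independence of the weight $\alpha$. Verifying that the resulting contribution is uniformly dominated in $(x,t)$ by the left-hand side for $s$ large works only because the Carleman weights $s\lambda^2\va$ and $s^3\lambda^4\va^3$ blow up as $s\to\infty$ independently of $v$; the uniformity in $v^2$ of the Appendix A constant is therefore essential, and is the reason the paper separates out that auxiliary estimate.
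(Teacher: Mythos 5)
Your outer reduction is exactly the paper's: bound $|L_0u|^2\le C|f|^2+C|L_1u|^2+C|\int_VKu\,dv'|^2$, use Cauchy--Schwarz in $v'$ together with the $v$-independence of the weight to convert the nonlocal term into $C|V|\int_{Q\times V}(|u|^2+|\pp_xu|^2)e^{2s\alpha}$, and absorb everything into the left-hand side for large $s,\lambda$; this part is correct and is precisely how the paper passes from its inequality (\ref{principal}) to (\ref{prop:bd:eq1}). The difficulty is that you then outsource the heart of the matter --- the Carleman estimate for $L_0=\pp_t-v^2\pp_x^2$ itself, i.e.\ (\ref{principal}) --- to ``the auxiliary estimate to be established in Appendix A.'' That is a misidentification: Appendix A (Lemma \ref{lemmace1}) proves a Carleman estimate for the \emph{first-order} spatial operator $\pp_xw+bw+\int_Vcw\,dv'$ with weight $e^{2s\alpha_\delta(x,t_0)}$, used in \S5 to recover $\vv{r}$ from $\vv{f}(\cdot,\cdot,t_0)$; it contains no parabolic estimate. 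The paper proves (\ref{principal}) inside the proof of Proposition \ref{prop:bd}, via the conjugation $Pz=e^{s\alpha}L_0(e^{-s\alpha}z)$, the splitting $Pz=P_1z+P_2z-R_0z$, and the term-by-term integration by parts $I_1,\dots,I_5$.

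Nor can you replace that computation by citing the classical one-dimensional parabolic Carleman estimate ``for each fixed $v$'': those estimates assume homogeneous Dirichlet data on the whole lateral boundary (or an interior observation set $\omega$), whereas here, for fixed $v$, only one endpoint of $(0,\ell)$ carries the condition $u=0$ (the $\Gamma_-$ endpoint). Consequently the boundary terms produced by the integration by parts do not vanish and their precise form --- traces of $|u|^2$, $|\pp_tu|^2$, $|\pp_xu|^2$ on $\Gamma_+$ plus the full Neumann trace $|\pp_xu(0,v,t)|^2$ over all $v\in V$, the latter forced by the sign of $(\pp_xd)\nu$ at $x=0$ for the decreasing weight $d$ --- has to be derived, not guessed; the paper's own remark identifies exactly this half-range boundary analysis as the new technical content of the proposition. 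You correctly anticipate which traces survive, but without carrying out the weighted $L^2$ identity and the sign analysis of $I_1$ through $I_5$ and of the boundary sum $B$, the core of the proof is missing. So: the absorption and boundary-bookkeeping framework is right and matches the paper, but the proof of (\ref{principal}) is a genuine gap that the cited auxiliary result does not fill.
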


\begin{proof}
It is sufficient to show the Carleman estimate for $L_0u$. Suppose we have
\bea
&&
\int_V\int_Q\left[\frac{1}{s\va}|\pp_tu|^2+
s\lambda^2\va|\pp_xu|^2+s^3\lambda^4\va^3|u|^2\right]e^{2s\alpha}\,dxdtdv
\nonumber \\
&&\le
C\int_V\int_Q|L_0u|^2e^{2s\alpha}\,dxdtdv
+Ce^{C(\lambda)s}\int_0^T\int_{\Gamma_+}\left(|u|^2+|\pp_tu|^2+|\pp_xu|^2
\right)\,dSdvdt
\nonumber \\
&&
+Ce^{C(\lambda)s}\int_V\int_0^T|\pp_x u(0,v,t)|^2\,dtdv.
\label{principal}
\eea
Since
\ba
|L_0u|^2
&\le&
C|f|^2+C|L_1u|^2+C\left|\int_VK(x,v,v')u(x,v',t)\,dv'\right|^2
\\
&\le&
C|f|^2+C|\pp_xu|^2+C|u|^2+C\left|\int_VK(x,v,v')u(x,v',t)\,dv'\right|^2,
\ea
we obtain
\ba
&&
\int_V\int_Q\left[\frac{1}{s\va}|\pp_tu|^2+
s\lambda^2\va|\pp_xu|^2+s^3\lambda^4\va^3|u|^2\right]e^{2s\alpha}\,dxdtdv
\\
&&\le
\int_V\int_Q\left[C|f|^2+C|\pp_xu|^2+C|u|^2+
C\left|\int_VK(x,v,v')u(x,v',t)\,dv'\right|^2\right]e^{2s\alpha}\,dxdtdv
\\
&&+
Ce^{C(\lambda)s}\int_0^T\int_{\Gamma_+}\left(|u|^2+|\pp_tu|^2+|\pp_xu|^2
\right)\,dSdvdt 
\\
&&
+Ce^{C(\lambda)s}\int_V\int_0^T|\pp_x u(0,v,t)|^2\,dtdv.
\ea
If we notice
\bea
&&
\int_V\int_Q\left|\int_VK(x,v,v')u(x,v',t)\,dv'\right|^2e^{2s\alpha}\,dxdtdv
\nonumber \\
&&\le
C\int_V\int_Q\left(\int_V\left[|u(x,v',t)|^2+|\pp_xu(x,v',t)|^2\right]\,dv'
\right)e^{2s\alpha}\,dxdtdv
\nonumber \\
&&\le
C|V|\int_V\int_Q\bigl(|u(x,v,t)|^2+|\pp_xu(x,v,t)|^2\bigr)e^{2s\alpha}\,dxdtdv,
\label{ifwenotice}
\eea
we have
\ba
&&
\int_V\int_Q\left[\frac{1}{s\va}|\pp_tu|^2+
s\lambda^2\va|\pp_xu|^2+s^3\lambda^4\va^3|u|^2\right]e^{2s\alpha}\,dxdtdv
\\
&&\le
C\int_V\int_Q |f|^2 e^{2s\alpha}\,dxdtdv
+C\int_V\int_Q\left(|\pp_xu|^2+|u|^2\right)e^{2s\alpha}\,dxdtdv
\\
&&+
Ce^{C(\lambda)s}\int_0^T\int_{\Gamma_+}\left(|u|^2+|\pp_tu|^2+|\pp_xu|^2
\right)\,dSdvdt 
\\
&&
+Ce^{C(\lambda)s}\int_V \int_0^T|\pp_x u(0,v,t)|^2\,dtdv.
\ea
Taking sufficiently large $s>0$, we can absorb the second term on the 
right-hand side of the above inequality and we obtain the Carleman estimate 
(\ref{prop:bd:eq1}). Below we will derive (\ref{principal}).

Let us define
\[
Pz:=e^{s\alpha}L_0(e^{-s\alpha}z)=e^{s\alpha}L_0u.
\]
We split $Pz$ into three parts:
\[
Pz=P_1z+P_2z-R_0z,
\]
where
\ba
P_1z
&=&
-v^2\pp_x^2z-s^2\lambda^2\va^2(\pp_xd)^2v^2z-s(\pp_t\alpha)z,
\\
P_2z
&=&
\pp_tz+2s\lambda\va(\pp_xd)v^2\pp_xz,
\\
R_0z
&=&
-s\lambda^2\va(\pp_xd)^2v^2z-s\lambda\va(\pp_x^2d)v^2z.
\ea
We note that
\[
\|P_1z+P_2z\|_{L^2(Q\times V)}^2
\le2\|Pz\|_{L^2(Q\times V)}^2+2\|R_0z\|_{L^2(Q\times V)}^2.
\]
Here,
\[
\|P_1z+P_2z\|_{L^2(Q\times V)}^2
=\|P_1z\|_{L^2(Q\times V)}^2+\|P_2z\|_{L^2(Q\times V)}^2
+2\int_{Q\times V}(P_1z)(P_2z)\,dxdvdt.
\]
Therefore we have
\be
\frac{1}{2}\|P_2z\|_{L^2(Q\times V)}^2+\int_{Q\times V}(P_1z)(P_2z)\,dxdvdt
\le\|Pz\|_{L^2(Q\times V)}^2+\|R_0z\|_{L^2(Q\times V)}^2.
\label{eq:prfce03}
\ee
Let us calculate the left-hand side of the above inequality term by term. 
First, using the inequality $|z_1+z_2|^2\ge\frac{1}{2}|z_1|^2-|z_2|^2$ 
($z_1,z_2\in\Cm$), we have for any $\varepsilon\in(0,1]$,
\bea
\|P_2z\|^2_{L^2(Q\times V)}
&=&
\int_V\int_Q|P_2 z|^2\,dxdtdv
\nonumber \\
&\ge&
\varepsilon\int_V\int_Q\frac{1}{s\va}|P_2 z|^2\,dxdtdv
\nonumber \\
&\ge&
\frac{\varepsilon}{2}\int_V\int_Q\frac{1}{s\va}|\pp_tz|^2\,dxdtdv
-4\varepsilon v_1^4\int_V\int_Q s\lambda^2\va(\pp_xd)^2|\pp_xz|^2\,dxdtdv.
\nonumber \\
\label{eq:prfce04}
\eea
The second term can be estimated as follows. Let us write
\[
\int_{Q\times V}(P_1z)(P_2z)\,dxdvdt=I_1+I_2+I_3+I_4+I_5,
\]
where
\ba
I_1
&=&
\int_{Q\times V}\bigl(-v^2\pp_x^2z\bigr)\bigl(\pp_tz\bigr)\,dxdvdt,
\\
I_2
&=&
\int_{Q\times V}\bigl(-v^2\pp_x^2z\bigr)
\bigl(2s\lambda\va(\pp_xd)v^2\pp_xz\bigr)\,dxdvdt,
\\
I_3
&=&
\int_{Q\times V}\bigl(-s^2\lambda^2\va^2(\pp_xd)^2v^2z\bigr)
\bigl(\pp_tz\bigr)\,dxdvdt,
\\
I_4
&=&
\int_{Q\times V}\bigl(-s^2\lambda^2\va^2(\pp_xd)^2v^2z\bigr)
\bigl(2s\lambda\va(\pp_xd)v^2\pp_xz\bigr)\,dxdvdt,
\\
I_5
&=&
\int_{Q\times V}\bigl(-s(\pp_t\alpha)z\bigr)
\bigl(\pp_tz+2s\lambda\va(\pp_xd)v^2\pp_xz\bigr)\,dxdvdt.
\ea
We can compute $I_1$ through $I_5$ using integration by parts and the 
Schwarz inequality. Note that $z(x,v,t)=\pp_tz(x,v,t)=0$ in 
$\Gamma_- \times (0,T)$ because $u(x,v,t)=0$, 
$(x,v,t)\in \Gamma_- \times (0,T)$. We have
\be
I_1
=-\int_0^T\left(\left.\int_{v_0}^{v_1}v^2(\pp_xz)(\pp_tz)\,dv\right|_{x=\ell}
+\left.\int_{-v_1}^{-v_0}v^2(\pp_xz)(\pp_tz)\,dv\right|_{x=0}\right)\,dt.
\label{eq:I1}
\ee
For the second term, there exists $C>0$ such that
\bea
I_2
&=&
-\int_V\int_Qs\lambda\va(\pp_xd)v^4\pp_x|\pp_xz|^2\,dxdtdv
\nonumber \\
&\ge&
\int_V\int_Qs\lambda^2\va(\pp_xd)^2v^4|\pp_xz|^2\,dxdtdv-
C\int_V\int_Qs\lambda\va|\pp_xz|^2\,dxdtdv
\nonumber \\
&-&
\int_0^T\int_V\left(\left.s\lambda\va(\pp_xd)v^4|\pp_xz|^2\right|_{x=\ell}
+\left.s\lambda\va(\pp_xd)v^4|\pp_xz|^2\right|_{x=0}\right)\,dvdt.
\label{eq:I2}
\eea
We can estimate the third term as
\be
I_3
=
-\frac{1}{2}\int_V\int_Qs^2\lambda^2\va^2(\pp_xd)^2v^2\pp_t|z|^2\,dxdtdv
\ge
-C\int_V\int_Qs^2\lambda^2\va^3|z|^2\,dxdtdv.
\label{eq:I3}
\ee
The fourth term is estimated as
\bea
I_4
&=&
-\int_V\int_Qs^3\lambda^3\va^3(\pp_xd)^3v^4\pp_x|z|^2\,dxdtdv
\nonumber \\
&\ge&
3\int_V\int_Qs^3\lambda^4\va^3(\pp_xd)^4v^4|z|^2\,dxdtdv
-C\int_V\int_Qs^3\lambda^3\va^3|z|^2\,dxdtdv
\nonumber \\
&-&
\int_0^Ts^3\lambda^3\left(
\left.\int_{v_0}^{v_1}\va^3(\pp_xd)^3v^4|z|^2\,dv\right|_{x=\ell}+
\left.\int_{-v_1}^{-v_0}\va^3(\pp_xd)^3v^4|z|^2\,dv\right|_{x=0}
\right)\,dt.
\nonumber \\
\label{eq:I4}
\eea
The last term $I_5$ is computed as
\bea
&&
I_5
=
-\frac{1}{2}\int_V\int_Qs(\pp_t\alpha)\pp_t|z|^2\,dxdtdv
-\int_V\int_Qs^2\lambda\va(\pp_xd)(\pp_t\alpha)v^2\pp_x|z|^2\,dxdtdv
\nonumber \\
&&\ge
-C(\lambda)\int_V\int_Q(s\va^3+s^2\va^3)|z|^2\,dxdtdv
\nonumber \\
&&-
\int_0^Ts^2\lambda\left(
\left.\int_{v_0}^{v_1}\va(\pp_xd)(\pp_t\alpha)v^2|z|^2\,dv\right|_{x=\ell}+
\left.\int_{-v_1}^{-v_0}\va(\pp_xd)(\pp_t\alpha)v^2|z|^2\,dv\right|_{x=0}
\right)\,dt.
\nonumber \\
\label{eq:I5}
\eea
By putting (\ref{eq:I1}) through (\ref{eq:I5}) together, we obtain
\bea
&&
\int_V\int_Qs\lambda^2\va(\pp_xd)^2v^4|\pp_xz|^2\,dxdtdv
+3\int_V\int_Qs^3\lambda^4\va^3(\pp_xd)^4v^4|z|^2\,dxdtdv
\nonumber \\
&&\le
\int_{Q\times V}(P_1z)(P_2 z)\,dxdvdt
+C\int_V\int_Qs\lambda\va|\pp_xz|^2\,dxdtdv
\nonumber \\
&&+
\int_V\int_Q(s^3\lambda^3\va^3+s^2\lambda^2\va^3)|z|^2\,dxdtdv
+C(\lambda)\int_V\int_Q(s\va^3+s^2\va^3)|z|^2\,dxdtdv
\nonumber \\
&&+
B,
\label{eq:III}
\eea
where
\ba
B
&=&
\int_0^T\left(
\left.\int_{v_0}^{v_1}v^2(\pp_xz)(\pp_tz)\,dv\right|_{x=\ell}+
\left.\int_{-v_1}^{-v_0}v^2(\pp_xz)(\pp_tz)\,dv\right|_{x=0}
\right)\,dt
\\
&+&
\int_0^T\int_V\left(
\left.s\lambda\va(\pp_xd)v^4|\pp_xz|^2\right|_{x=\ell}+
\left.s\lambda\va(\pp_xd)v^4|\pp_xz|^2\right|_{x=0}
\right)\,dvdt
\\
&+&
\int_0^T\left(
\left.\int_{v_0}^{v_1}s^3\lambda^3\va^3(\pp_xd)^3v^4|z|^2\,dv\right|_{x=\ell}+
\left.\int_{-v_1}^{-v_0}s^3\lambda^3\va^3(\pp_xd)^3v^4|z|^2\,dv\right|_{x=0}
\right)\,dt
\\
&+&
\int_0^Ts^2\lambda\left(
\left.\int_{v_0}^{v_1}\va(\pp_xd)(\pp_t\alpha)v^2|z|^2\,dv\right|_{x=\ell}+
\left.\int_{-v_1}^{-v_0}\va(\pp_xd)(\pp_t\alpha)v^2|z|^2\,dv\right|_{x=0}
\right)\,dt.
\ea
The remainder term $\|R_0z\|^2_{L^2 (Q\times V)}$ is estimated as follows.
\be
\|R_0z\|^2_{L^2 (Q\times V)}
\le
C\int_V\int_Q(s^2\lambda^4\va^2+s^2\lambda^2\va^2)|z|^2\,dxdtdv.
\label{eq:RR}
\ee
Let us apply the estimates (\ref{eq:prfce04}), (\ref{eq:III}), (\ref{eq:RR}) 
in (\ref{eq:prfce03}). For sufficiently small $\varepsilon$, we have
\bea
&&
\int_V\int_Q\left[\frac{1}{s\va}|\pp_tz|^2+s\lambda^2\va|\pp_xz|^2+
s^3\lambda^4\va^3|z|^2\right]\,dxdtdv
\nonumber \\
&&\le
C\int_V\int_Q|Pz|^2\,dxdtdv+
C\int_V\int_Qs\lambda\va|\pp_xz|^2\,dxdtdv
\nonumber \\
&&+
C\int_V\int_Q(s^2\lambda^4\va^2+s^3\lambda^3\va^3+
s^2\lambda^2\va^3+s^2\lambda^2\va^2)|z|^2\,dxdtdv
\nonumber \\
&&+
C(\lambda)\int_V\int_Q(s^2\va^3+s\va^3)|z|^2\,dxdtdv+CB.
\label{eq:prfce08}
\eea
The boundary term is estimated as
\bea
B
&\le&
Ce^{C(\lambda)s}\int_0^T\int_{\Gamma_+}\left(|z|^2+|\pp_tz|^2+
|\pp_xz|^2\right)\,dSdvdt
\nonumber\\
&+&
Ce^{C(\lambda)s}\int_0^T\int_{V}|\pp_xz(0,v,t)|^2dvdt.
\label{eq:bdry}
\eea
Therefore for sufficiently large $s,\lambda$, we obtain
\bea
&&
\int_V\int_Q\left[\frac{1}{s\va}|\pp_tu|^2
+s\lambda^2\va|\pp_xu|^2+s^3\lambda^4\va^3|u|^2\right]e^{2s\alpha}
\,dxdtdv
\nonumber \\
&&\le
C\int_V\int_Q|L_0u|^2e^{2s\alpha}\,dxdtdv
+C\int_V\int_Q s\lambda\va|\pp_xu|^2e^{2s\alpha}\,dxdtdv
\nonumber \\
&&+
C\int_V\int_Q(s^2\lambda^4\va^2+s^3\lambda^3\va^3)|u|^2e^{2s\alpha}\,dxdtdv
+C(\lambda)\int_V\int_Q s^2\va^3|u|^2e^{2s\alpha}\,dxdtdv
\nonumber \\
&&+
Ce^{C(\lambda)s}\int_0^T\int_{\Gamma_+}\bigl(|u|^2+|\pp_tu|^2+|\pp_xu|^2
\bigr)\,dSdvdt
\nonumber\\
&&
+Ce^{C(\lambda)s}\int_0^T\int_{V}|\pp_x u(0,v,t)|^2dvdt.
\label{eq:prfce12}
\eea
The second, third and fourth terms on the right-hand side of 
(\ref{eq:prfce12}) can be absorbed in the left-hand side, and 
(\ref{principal}) is derived. Thus the proof is complete.
\end{proof}

\begin{rmk}
The proof is similar to the calculation in 
\cite{Emanuilov95, Fursikov-Imanuvilov96, Yamamoto09} 
in the sense that the same weight function is used. However, our equation 
contains the integral term, and furthermore the surface integral appears in 
the Carleman estimate due to the half-range boundary condition in (\ref{rte4}).
\end{rmk}

\section{
Proofs of Theorems \ref{thm:bd}, \ref{thm:bdt}, and \ref{thm:bds}
}

\subsection{Proof of Theorem \ref{thm:bd}}
\label{proofbd}

Here we prove Theorem \ref{thm:bd} by making use of Proposition \ref{prop:bd}.

Let us recall that $\vv{u}$ satisfies (\ref{rte3}). We set
\[
\vv{y}(x,v,t)=\pp_t\vv{u}(x,v,t).
\]
We obtain
\be
\pp_t\vv{y}=v^2\pp_x^2\vv{y}+L_1\vv{y}+\int_VK(x,v,v')\vv{y}(x,v',t)\,dv'+
\pp_t\vv{f}(x,v,t),
\label{zeq}
\ee
where each component of $\vv{y}$ satisfies $y_j(x,v,t)=0$ on 
$\Gamma_-\times(0,T)$ ($j=1,2$). For $0<t_0<T$, we have
\be
f_j(x,v,t_0)=y_j(x,v,t_0)-v^2\pp_x^2u_j(x,v,t_0)-L_1u_j(x,v,t_0)
-\int_VK(x,v,v')u_j(x,v',t_0)\,dv',
\label{zeqcond1}
\ee
for $j=1,2$.

We consider the Carleman estimate for (\ref{zeq}) on $Q_\delta$. 
We here use the following weight function for the Carleman estimate instead of 
$\alpha(x,t)$ in (\ref{weightalpha}). 
\[
\alpha_\delta(x,t)=
\frac{e^{\lambda d(x)}-e^{2\lambda\|d\|_{C(\overline{\Omega})}}}{(t-t_0+\delta)(t_0+\delta-t)}.
\]
We define
\[
\va_\delta(x,t)=\frac{e^{\lambda d(x)}}{(t-t_0+\delta)(t_0+\delta-t)}.
\]
We can readily see that Proposition \ref{prop:bd} holds true for 
$t\in(t_0-\delta,t_0+\delta)$ instead of $t\in(0,T)$. 
For a sufficiently large fixed $\lambda>0$, we can write the Carleman 
estimate in Proposition \ref{prop:bd} as
\bea
&&
\int_V\int_{Q_\delta}\left[\frac{1}{s\va_{\delta}} |\pp_ty_j|^2+
s\va_{\delta}|\pp_xy_j|^2+s^3\va_{\delta}^3|y_j|^2\right]
e^{2s\alpha_{\delta}}\,dxdtdv
\nonumber \\
&&\le 
C\int_V\int_{Q_\delta}|\pp_tf_j|^2e^{2s\alpha_{\delta}}\,dxdtdv+
Ce^{Cs}\int_{t_0-\delta}^{t_0+\delta}\int_{\Gamma_+}
\left(|y_j|^2+|\pp_ty_j|^2+|\pp_xy_j|^2\right)\,dSdvdt 
\nonumber \\
&&+
Ce^{C s}\int_V\int_{t_0-\delta}^{t_0+\delta}|\pp_xy_j(0,v,t)|^2\,dtdv,
\label{zce}
\eea
for $j=1,2$.

To estimate $\int_{\Omega\times V}|\pp_tu_j(x,v,t_0)|^2
e^{2s\alpha_{\delta}(x,t_0)}\,dxdv$ from above, we note 
that
\[
\lim_{t\to t_0-\delta+0}e^{2s\alpha_{\delta}(x,t)}=0
\qquad\mbox{for}\quad x\in \Omega.
\]
Hence we have
\ba
\int_{\Omega\times V}\left|y_j(x,v,t_0)\right|^2e^{2s\alpha_\delta (x,t_0)}\,
dxdv
&=&
\int_{t_0-\delta}^{t_0}\pp_t\left(\int_{\Omega\times V}|y_j(x,v,t)|^2
e^{2s\alpha_{\delta}(x,t)}\,dxdv\right)\,dt
\\
&=&
\int_{\Omega\times V}\int_{t_0-\delta}^{t_0}\left(2|y_j||\pp_ty_j|+
2s(\pp_t\alpha_\delta)|y_j|^2\right)e^{2s\alpha_{\delta}(x,t)}\,dtdxdv.
\ea
We can further estimate the above inequality by noting that 
$|\pp_t\alpha_\delta|\le C\va_\delta^2$ and using
\[
|y_j||\pp_ty_j|
=\Biggl(\Biggr. \frac{1}{s\sqrt{\va_\delta}}|\pp_ty_j|\Biggl.\Biggr) 
\Bigl(\Bigr. s\sqrt{\va_\delta}|y_j| \Bigl.\Bigr)
\le\frac{1}{2s^2\va_\delta}|\pp_ty_j|^2+\frac{1}{2}s^2\va_\delta|y_j|^2,
\]
and applying (\ref{zce}). We obtain
\ba
&&
\int_{\Omega\times V}\left|y_j(x,v,t_0)\right|^2e^{2s\alpha_\delta(x,t_0)}
\,dxdv
\\
&&\le
C\int_{V}\int_{Q_\delta}
\left(\frac{1}{s^2\va_\delta}|\pp_ty_j|^2+s^2\va_\delta^2|y_j|^2\right)
e^{2s\alpha_\delta(x,t)}\,dxdtdv
\\
&&\le
\frac{C}{s}\int_{V}\int_{Q_\delta}|\pp_tf|^2e^{2s\alpha_\delta(x,t)}\,dxdtdv+
Ce^{Cs}\int_{t_0-\delta}^{t_0+\delta}\int_{\Gamma_+}
\left(|y_j|^2+|\pp_ty_j|^2+|\pp_xy_j|^2\right)\,dSdvdt
\\
&&+
Ce^{Cs}\int_V\int_{t_0-\delta}^{t_0+\delta}|\pp_xy_j(0,v,t)|^2\,dtdv.
\ea
That is,
\ba
\int_{\Omega\times V}\left|\pp_tu_j(x,v,t_0)\right|^2
e^{2s\alpha_\delta(x,t_0)}\,dxdv
&\le&
\frac{C}{s}\int_{V}\int_{Q_\delta}|\pp_tf_j|^2e^{2s\alpha_\delta(x,t)}\,dxdtdv
\\
&+&
Ce^{Cs}\int_{t_0-\delta}^{t_0+\delta}\int_{\Gamma_+}
\left(|\pp_tu_j|^2+|\pp_t^2u_j|^2+|\pp_x\pp_tu_j|^2\right)\,dSdvdt
\\
&+&
Ce^{Cs}\int_V\int_{t_0-\delta}^{t_0+\delta}|\pp_x\pp_tu_j(0,v,t)|^2\,dtdv,
\ea
for $j=1,2$. By taking the weighted $L^2$ norm of (\ref{zeqcond1}) using the 
above inequality, we obtain
\bea
&&
\int_{\Omega\times V}\left|f_j(x,v,t_0)\right|^2e^{2s\alpha_\delta(x,t_0)}
\,dxdv
\nonumber \\
&&\le
\int_{\Omega\times V}\left|\pp_tu_j(x,v,t_0)\right|^2
e^{2s\alpha_\delta(x,t_0)}\,dxdv
+Ce^{Cs}\left\|u_j(\cdot,\cdot,t_0)\right\|_{H^2(\Omega;L^2(V))}^2
\nonumber \\
&&\le
\frac{C}{s}\int_{\Omega\times V}|\pp_tf_j|^2e^{2s\alpha_\delta(x,t)}\,dxdvdt
+Ce^{Cs}\left\|u_j(\cdot,\cdot,t_0)\right\|_{H^2(\Omega;L^2(V))}^2
\nonumber \\
&&+
Ce^{Cs}\int_{t_0-\delta}^{t_0+\delta}\int_{\Gamma_+}
\left(|\pp_tu_j|^2+|\pp_t^2u_j|^2+|\pp_x\pp_tu_j|^2\right)\,dSdvdt
\nonumber \\
&&+
Ce^{Cs}\int_V\int_{t_0-\delta}^{t_0+\delta}|\pp_x\pp_tu_j(0,v,t)|^2\,dtdv,
\label{fe01}
\eea
where the integral term on the right-hand side of (\ref{zeqcond1}) was 
estimated by a calculation similar to (\ref{ifwenotice}). By differentiating 
(\ref{defoff}), we obtain
\ba
&&
\pp_t\vv{f}(x,v,t)=
-v\pp_tR(x,v,t)\pp_x\vv{r}(x,v)
\\
&&
-\left[v\pp_t\pp_xR(x,v,t)+\sigma_t(x,v)\pp_tR(x,v,t)-
\pp_t\pp_t^{1/2}R(x,v,t)+\frac{1}{2\Gamma(\frac{1}{2})t\sqrt{t}}R(x,v,0)\right]
\vv{r}(x,v)
\\
&&
+\sigma_s(x,v)\int_Vp(x,v,v^\prime)\pp_tR(x,v^\prime,t)\vv{r}(x,v^\prime)
\,dv^\prime.
\ea
Thus we have
\bea
&&
\int_V\int_{Q_\delta}|\pp_tf_j|^2e^{2s\alpha_{\delta}(x,t)}\,dxdtdv 
\nonumber \\
&&\leq
C\int_V\int_{Q_\delta}\left(|r_t|^2+|\pp_xr_t|^2+|r_s|^2+|\pp_xr_s|^2\right)
e^{2s\alpha_{\delta}(x,t)}\,dxdtdv
\nonumber \\
&&\leq
C\int_{\Omega\times V}\left(|r_t|^2+|\pp_xr_t|^2+|r_s|^2+|\pp_xr_s|^2\right)
e^{2s\alpha_{\delta}(x,t_0)}\,dxdv.
\label{fte}
\eea
Here, since $\alpha_\delta(x,t)\le\alpha_\delta(x,t_0)$ was used, $C$ depends on $t_0$ and $\delta$. By (\ref{fe01}) and (\ref{fte}), we obtain
\bea
&&
\int_{\Omega\times V}\left|f_j(x,v,t_0)\right|^2e^{2s\alpha_\delta(x,t_0)}\,
dxdv
\nonumber \\
&&\le
\frac{C}{s}
\int_{\Omega\times V}
\left(|\pp_xr_t|^2+|r_t|^2+|\pp_xr_s|^2+|r_s|^2\right)
e^{2s\alpha_{\delta}(x,t_0)}\,dxdv
+Ce^{Cs}\left\|u_j(\cdot,\cdot,t_0)\right\|_{H^2(\Omega;L^2(V))}^2
\nonumber \\
&&+
Ce^{Cs}\int_{t_0-\delta}^{t_0+\delta}\int_{\Gamma_+}
\left(|\pp_tu_j|^2+|\pp_t^2u_j|^2+|\pp_x\pp_tu_j|^2\right)\,dSdvdt
\nonumber \\
&&+
Ce^{Cs}\int_V\int_{t_0-\delta}^{t_0+\delta}|\pp_x\pp_tu_j(0,v,t)|^2\,dtdv,
\label{fe02}
\eea
where $j=1,2$.

Let us estimate $\int_{\Omega\times V}|f_j(x,v,t_0)|^2
e^{2s\alpha_{\delta}(x,t_0)}\,dxdv$ from below in terms of $r_t$ and $r_s$. 
For this purpose we use the following proposition.

\begin{prop}
Suppose $\vv{w}(x,v)$ satisfies
\[
\pp_x\vv{w}(x,v)+A(x,v)\vv{w}(x,v)+\int_VD(x,v,v')\vv{w}(x,v')\,dv'
=\vv{F}(x,v),
\]
where $A\in L^\infty(\Omega\times V)^{2\times2}$ and 
$D\in L^\infty(\Omega\times V\times V)^{2\times2}$. Then for sufficiently 
large $s>0$, there exists a constant $C>0$ such that
\[
\int_{\Omega\times V} 
\left[\left|\pp_x\vv{w}(x,v)\right|^2+s^2\left|\vv{w}(x,v)\right|^2\right]
e^{2s\alpha_\delta (x,t_0)}\,dxdv
\le
C\int_{\Omega\times V}\left|\vv{F}(x,v)\right|^2e^{2s\alpha_\delta(x,t_0)}
\,dxdv,
\]
for all $\vv{w}\in H^1(\Omega;L^2(V))^2$ and $\vv{w}(0,v)=\vv{0}$, $v\in V$. 
\end{prop}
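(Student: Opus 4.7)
The plan is to perform the standard first-order Carleman argument in the $x$ variable, exploiting the fact that $t_0$ is frozen so the weight $\alpha_\delta(\cdot,t_0)$ is a smooth function of $x$ alone, with $\pp_x\alpha_\delta(x,t_0)=\lambda(\pp_x d(x))e^{\lambda d(x)}/\delta^2$ bounded strictly away from zero (and negative) on $\overline{\Omega}$ since $\pp_x d<0$.

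\textbf{Step 1 (Conjugation).} Set $\vv{z}(x,v)=e^{s\alpha_\delta(x,t_0)}\vv{w}(x,v)$. Because the weight is independent of $v$ and $v'$, multiplying the equation by $e^{s\alpha_\delta(x,t_0)}$ simply pulls the exponential through the $v'$-integral, giving $e^{s\alpha_\delta}\int_V D\,\vv{w}(x,v')\,dv'=\int_V D(x,v,v')\vv{z}(x,v')\,dv'$. Using $e^{s\alpha_\delta}\pp_x\vv{w}=\pp_x\vv{z}-s(\pp_x\alpha_\delta)\vv{z}$, the conjugated equation reads
\[
\pp_x\vv{z}=s(\pp_x\alpha_\delta)\vv{z}-A\vv{z}-\int_V D(x,v,v')\vv{z}(x,v')\,dv'+e^{s\alpha_\delta}\vv{F},
\]
together with $\vv{z}(0,v)=\vv{0}$.

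\textbf{Step 2 (Energy identity).} Take the Euclidean inner product with $\vv{z}$, so that the left-hand side becomes $\tfrac12 \pp_x|\vv{z}|^2$, and integrate over $\Omega\times V$. The boundary contribution at $x=0$ vanishes, and the one at $x=\ell$ is non-negative and may be dropped. Moving the (negative) coefficient $s(\pp_x\alpha_\delta)$ to the left yields
\[
c_0 s\int_{\Omega\times V}|\vv{z}|^2\,dxdv\le\int_{\Omega\times V}\left[\,|\vv{z}|\,|A\vv{z}|+|\vv{z}|\left|\int_V D\vv{z}(x,v')\,dv'\right|+|\vv{z}|\,e^{s\alpha_\delta}|\vv{F}|\right]dxdv,
\]
with $c_0=c_0(\lambda,\delta)>0$. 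The $A$-term is bounded by $C\int|\vv{z}|^2$. For the integral term, Cauchy--Schwarz in $v'$ combined with $D\in L^\infty$ and $|V|<\infty$ yields $\big|\int_V D\vv{z}\,dv'\big|^2\le C\int_V|\vv{z}|^2\,dv'$, hence its contribution is also bounded by $C\int|\vv{z}|^2$. For the forcing term, Young's inequality gives $|\vv{z}|\,e^{s\alpha_\delta}|\vv{F}|\le (s/4)|\vv{z}|^2+(1/s)e^{2s\alpha_\delta}|\vv{F}|^2$. For $s$ large enough, the resulting $(C+s/4)\int|\vv{z}|^2$ on the right is absorbed into $c_0 s\int|\vv{z}|^2$ on the left, leaving
\[
s^2\int_{\Omega\times V}|\vv{z}|^2\,dxdv\le C\int_{\Omega\times V}e^{2s\alpha_\delta(x,t_0)}|\vv{F}|^2\,dxdv.
\]

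\textbf{Step 3 (Controlling $\pp_x\vv{w}$).} From the original equation, $|\pp_x\vv{w}|^2\le C|\vv{w}|^2+C\int_V|\vv{w}|^2\,dv'+C|\vv{F}|^2$. Multiplying by $e^{2s\alpha_\delta(x,t_0)}$ (which depends only on $x$) and integrating, the $|\vv{w}|^2$ and $\int_V|\vv{w}|^2$ pieces become integrals of $|\vv{z}|^2$ on $\Omega\times V$, hence are dominated via Step 2 by $(C/s^2)\int e^{2s\alpha_\delta}|\vv{F}|^2$. Adding this to the $s^2|\vv{w}|^2$ estimate yields the claim.

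\textbf{Main obstacle.} The only non-routine point is making sure that the non-local $v'$-coupling in the integral operator does not spoil the positivity produced by the weight. This is mild: since $e^{s\alpha_\delta(x,t_0)}$ does not depend on $v$ or $v'$, the conjugation passes cleanly through the integral, and the resulting term is absorbed by Cauchy--Schwarz in $v'$ into a constant times $\int|\vv{z}|^2$, which is in turn swallowed by the dominant $c_0 s\int|\vv{z}|^2$ once $s$ is chosen sufficiently large.
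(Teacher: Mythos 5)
Your argument is correct, and it takes a genuinely different (and somewhat more elementary) route than the paper. The paper first reduces the $2\times2$ system to two scalar equations by moving the off-diagonal parts of $A$ and $D$ into modified right-hand sides $\tilde{F}_1,\tilde{F}_2$, applies a scalar first-order Carleman lemma (Lemma A.1 in the appendix) to each component, and then absorbs the cross-coupling terms using the $s^2$ weight; the scalar lemma itself is proved by expanding $\|\widetilde{P}\widetilde{w}\|_{L^2}^2$ for the conjugated operator $\widetilde{P}\widetilde{w}=\pp_x\widetilde{w}-s\lambda\va_{\delta}(\pp_xd)\widetilde{w}$ and integrating the cross term by parts. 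You instead keep the system in vector form throughout and run a direct energy identity: pair the conjugated equation with $\vv{z}$, integrate $\tfrac12\pp_x|\vv{z}|^2$ to a sign-definite boundary term, and extract positivity from $-s\pp_x\alpha_\delta\ge c_0 s$. This avoids both the component splitting and the auxiliary lemma, at the cost of producing only $s\int|\vv{z}|^2$ directly, which you then upgrade to $s^2\int|\vv{z}|^2$ by weighting the source term with $1/s$ in Young's inequality --- the same final power the paper gets from its quadratic expansion. Both proofs recover $\pp_x\vv{w}$ a posteriori from the equation. One small point to tidy: in Step 2 your Young split $(s/4)|\vv{z}|^2+(1/s)e^{2s\alpha_\delta}|\vv{F}|^2$ only absorbs into $c_0 s\int|\vv{z}|^2$ if $c_0>1/4$; you should take the split proportional to $c_0$, e.g.\ $\tfrac{c_0 s}{2}|\vv{z}|^2+\tfrac{1}{2c_0 s}e^{2s\alpha_\delta}|\vv{F}|^2$, which changes nothing else.
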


\begin{proof}
Let us express $A$, $D$, $\vv{w}$, and $\vv{F}$ as
\[
A=\left(\begin{array}{cc}
A_{11} & A_{12} \\ A_{21} & A_{22}
\end{array}\right),
\qquad
D=\left(\begin{array}{cc}
D_{11} & D_{12} \\ D_{21} & D_{22}
\end{array}\right),
\qquad
\vv{w}=\left(\begin{array}{c}w_1 \\ w_2\end{array}\right),
\qquad
\vv{F}=\left(\begin{array}{c}F_1 \\ F_2\end{array}\right).
\]
We have
\ba
&&
\pp_xw_1(x,v)+A_{11}(x,v)w_1(x,v)+\int_VD_{11}(x,v,v')w_1(x,v')\,dv'
=\tilde{F}_1(x,v),
\\
&&
\pp_xw_2(x,v)+A_{22}(x,v)w_2(x,v)+\int_VD_{22}(x,v,v')w_2(x,v')\,dv'
=\tilde{F}_2(x,v),
\ea
where
\ba
&&
\tilde{F}_1(x,v)
=F_1(x,v)-A_{12}(x,v)w_2(x,v)-\int_VD_{12}(x,v,v')w_2(x,v')\,dv',
\\
&&
\tilde{F}_2(x,v)
=F_2(x,v)-A_{21}(x,v)w_1(x,v)-\int_VD_{21}(x,v,v')w_1(x,v')\,dv'.
\ea
If we use Lemma \ref{lemmace1} in Appendix, we obtain
\ba
&&
\int_{\Omega\times V}\left(|\pp_xw_1(x,v)|^2+s^2|w_1(x,v)|^2+
|\pp_xw_2(x,v)|^2+s^2|w_2(x,v)|^2\right)e^{2s\alpha_{\delta}(x,t_0)}\,dxdv
\\
&&\le
C\int_{\Omega\times V}\left(\left|\tilde{F}_1(x,v)\right|^2+
\left|\tilde{F}_2(x,v)\right|^2\right)e^{2s\alpha_{\delta}(x,t_0)}\,dxdv
\\
&&\le
C\int_{\Omega\times V}\left(\left|F_1(x,v)\right|^2+
\left|F_2(x,v)\right|^2\right)\,dxdv
+C\int_{\Omega\times V}\left(|w_1(x,v)|^2+|w_2(x,v)|^2\right)\,dxdv.
\ea
The proof is complete by noticing that terms 
$\int_V|w_j(x,v)|^2\,dv$ ($j=1,2$) can be absorbed to the left-hand side 
if $s$ is sufficiently large.
\end{proof}

Recall that we assumed $\det{R(x,v,t_0)}\neq0$ and $|v|>v_0>0$. We apply 
the above Proposition after rewriting (\ref{defoff}) as
\ba
&&
\pp_x\vv{r}(x,v)+
\frac{1}{vR(x,v,t_0)}
\\
&&\times
\left(v\pp_xR(x,v,t_0)+\sigma_t(x,v)R(x,v,t_0)-\pp_t^{1/2}R(x,v,t_0)-
\frac{1}{\Gamma\left(\frac{1}{2}\right)t^{1/2}}R(x,v,0)\right)\vv{r}(x,v)
\\
&&+
\int_V\left(\frac{-\sigma_s(x,v)}{vR(x,v,t_0)}p(x,v,v')R(x,v',t_0)\right)
\vv{r}(x,v')\,dv'
\\
&&=
\frac{-1}{vR(x,v,t_0)}\vv{f}(x,v,t_0).
\ea
By $1/R$ we denote the inverse matrix of $R$, that is, $1/R=R^{-1}$. We obtain
\bea
&&
\int_{\Omega\times V}\left(|\pp_xr_t|^2+s^2|r_t|^2+|\pp_xr_s|^2+s^2|r_s|^2
\right)e^{2s\alpha_{\delta}(x,t_0)}\,dxdv
\nonumber \\
&&\le
C\sum_{j=1}^2\int_{\Omega\times V}|f_j(x,v,t_0)|^2e^{2s\alpha_{\delta}(x,t_0)}
\,dxdv.
\label{inu}
\eea
Thus we have
\bea
&&
\left(1-\frac{C}{s}\right)
\int_{\Omega\times V}
\left(
|\pp_x r_t|^2+|r_t|^2
+
|\pp_x r_s|^2+|r_s|^2
\right)e^{2s\alpha_{\delta}(x,t_0)}
\,dxdv
\nonumber \\
&&\le
Ce^{Cs}
\sum_{j=1}^2
\left\|u_j(\cdot,\cdot,t_0)\right\|_{H^2(\Omega;L^2(V))}^2
\nonumber \\
&&+
Ce^{Cs}
\sum_{j=1}^2
\int_{t_0-\delta}^{t_0+\delta}\int_{\Gamma_+}
\left(|\pp_t u_j|^2+|\pp_t^2 u_j|^2+|\pp_x \pp_t u_j|^2 \right)
\,dSdvdt
\nonumber \\
&&+
Ce^{C s}
\sum_{j=1}^2
\int_V \int_{t_0-\delta}^{t_0+\delta} |\pp_x \pp_t u_j(0,v,t)|^2 \,dtdv.
\nonumber
\eea
If we take sufficiently large $s>0$,
we obtain the stability estimate in Theorem \ref{thm:bd}.

\hfill\qed

\subsection{Proof of Theorem \ref{thm:bdt}}

Instead of the vector-valued $\vv{r}(x,v),\vv{u}(x,v,t)$, we introduce
\[
r(x,v)=r_t(x,v),\qquad u(x,v,t)=u^{(1)}(x,v,t)-u^{(2)}(x,v,t).
\]
Correspondingly, we have $R(x,v,t)=-u_1^{(2)}(x,v,t)$. We can carry out almost the identical calculation in \S\ref{half2one} and \S\ref{proofbd} with these $r(x,v),u(x,v,t)$. As a result, we similarly obtain the stability estimate in Theorem \ref{thm:bdt}.

\hfill\qed

\subsection{Proof of Theorem \ref{thm:bds}}

Instead of the vector-valued $\vv{r}(x,v),\vv{u}(x,v,t)$, we introduce
\[
r(x,v)=r_s(x,v),\qquad u(x,v,t)=u^{(1)}(x,v,t)-u^{(2)}(x,v,t).
\]
In this case, we have $R(x,v,t)=\int_Vp(x,v,v')u_1^{(2)}(x,v',t)\,dv'$. We can carry out almost the same calculation in \S\ref{half2one} and \S\ref{proofbd} with these $r(x,v),u(x,v,t)$. As a result, we similarly obtain the stability estimate in Theorem \ref{thm:bds}.

\hfill\qed

\section*{Acknowledgement(s)}

This work was supported by the interdisciplinary project on environmental transfer of radionuclides from University of Tsukuba and Hirosaki University, and by the A3 foresight program ``Modeling and Computation of Applied Inverse Problems'' of the Japan Society of the Promotion of Science. MM also acknowledges support from Grant-in-Aid for Scientific Research (17K05572 and 17H02081) of the Japan Society for the Promotion of Science (JSPS).

\appendix

\section{}

Let us consider
\be
\pp_xw(x,v)+b(x,v)w(x,v)+\int_Vc(x,v,v^\prime)w(x,v^\prime)\,dv^\prime=F(x,v),
\label{eqw}
\ee
where $b\in L^\infty(\Omega\times V)$ and 
$c\in L^\infty(\Omega\times V\times V)$.

\begin{lem}
\label{lemmace1}
For sufficiently large $s>0$, there exists a constant $C>0$ such that
\[
\int_{\Omega\times V} 
\left[\left|\pp_x w (x,v)\right|^2 + s^2 \left|w (x,v)\right|^2\right]
e^{2s\alpha_\delta (x,t_0)}\,dxdv
\le
C\int_{\Omega\times V}\left|F(x,v)\right|^2e^{2s\alpha_\delta(x,t_0)}\,dxdv,
\]
for all $w \in H^1(\Omega;L^2(V))$ satisfying (\ref{eqw}) and $w(0,v)=0$, 
$v\in V$. 
\end{lem}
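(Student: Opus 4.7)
My plan is to use the standard Carleman conjugation: set $z(x,v) = e^{s\alpha_\delta(x,t_0)}w(x,v)$ and derive a weighted $L^2$ estimate for $z$ by testing the conjugated equation against $z$. Since $\alpha_\delta(x,t_0)$ is independent of $v$, conjugation of (\ref{eqw}) yields
\[
\pp_x z - s(\pp_x \alpha_\delta(x,t_0))z + b(x,v)z + \int_V c(x,v,v')z(x,v')\,dv' = e^{s\alpha_\delta(x,t_0)}F(x,v),
\]
the key observation being that the weight can be pulled inside the integral because $\alpha_\delta$ has no $v$-dependence (otherwise the conjugated integral would not close).

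Next I would multiply by $z$ and integrate over $\Omega \times V$. The term $\int (\pp_x z)\, z$ integrates by parts to $\frac{1}{2}\int_V z(\ell,v)^2\,dv \geq 0$, with the $x=0$ boundary contribution vanishing thanks to $w(0,v)=0$. The dominant contribution comes from $-s(\pp_x \alpha_\delta(x,t_0))$; since $\pp_x \alpha_\delta(x,t_0)=\lambda(\pp_x d)e^{\lambda d(x)}/\delta^2$ and $\pp_x d<0$ on $\overline{\Omega}$ by the lemma on $d$, this quantity is pointwise bounded below by a positive constant times $s$. The terms involving $b$ and the kernel $c$ contribute in absolute value at most $C\int z^2$ via the $L^\infty$-bounds on $b,c$, Cauchy-Schwarz, and the finiteness of $|V|$, with $C$ independent of $s$. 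The right-hand side is controlled by Young's inequality, splitting as $\frac{cs}{4}\int z^2 + \frac{1}{cs}\int e^{2s\alpha_\delta}F^2$. For $s$ sufficiently large the $\int z^2$ terms on the right are absorbed by the positive $s$-term on the left, yielding
\[
s^2\int_{\Omega \times V} |w|^2 e^{2s\alpha_\delta(x,t_0)}\,dxdv \leq C\int_{\Omega \times V} |F|^2 e^{2s\alpha_\delta(x,t_0)}\,dxdv.
\]

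Finally, to recover the $|\pp_x w|^2$ contribution I would read it off directly from the equation: $\pp_x w = F - bw - \int_V c w(\cdot,v')\,dv'$, so $|\pp_x w|^2 \leq C(|F|^2 + |w|^2 + \int_V |w(x,v')|^2\,dv')$. Weighting by $e^{2s\alpha_\delta(x,t_0)}$, integrating over $\Omega \times V$, and applying the $s^2$-estimate just established to absorb the $|w|$-contributions for large $s$, one obtains the corresponding bound on $\int |\pp_x w|^2 e^{2s\alpha_\delta}$. Adding this to the $|w|^2$ bound gives the lemma. The main obstacle is that the integral operator in $v'$ couples the pointwise-in-$v$ energy estimates; this is handled by the Cauchy-Schwarz/$L^\infty$ estimate on $c$, which produces only an $O(1)$ multiple of $\int z^2$ that is absorbed by the $O(s)$ gain coming from $-s\,\pp_x\alpha_\delta$.
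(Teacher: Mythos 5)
Your argument is correct, and it reaches the conclusion by a recognizably different (and somewhat more elementary) route than the paper. The paper conjugates only the principal part, setting $\widetilde{w}=we^{s\alpha_\delta(\cdot,t_0)}$ and $\widetilde{P}\widetilde{w}=\pp_x\widetilde{w}-s\lambda\va_\delta(\cdot,t_0)(\pp_xd)\widetilde{w}$, then expands $\|\widetilde{P}\widetilde{w}\|_{L^2(\Omega\times V)}^2$ as a sum of squares plus a cross term; the $s^2\int|\widetilde{w}|^2$ gain comes from the square of the zeroth-order piece $\|s\lambda\va_\delta(\pp_xd)\widetilde{w}\|^2$, while the cross term is integrated by parts and its boundary contribution is discarded using $\pp_xd<0$ and $w(0,\cdot)=0$ -- exactly the same structural facts you invoke. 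You instead conjugate the full equation (correctly observing that the $v$-independence of $\alpha_\delta$ lets the weight pass through the integral operator) and test against $z$ itself, extracting the coercive term $-s(\pp_x\alpha_\delta)|z|^2\ge cs|z|^2$ and closing with Young's inequality weighted by $1/s$, which yields the $s^2$ factor. Both proofs then recover the $|\pp_xw|^2$ term identically, by reading $\pp_xw$ off from (\ref{eqw}) and absorbing the $|w|^2$ and nonlocal contributions for large $s$. What your energy pairing buys is brevity and transparency for this first-order-in-$x$ operator; what the paper's squared-operator formulation buys is uniformity with the parabolic Carleman estimate of Proposition 4.3, whose template (split $Pz=P_1z+P_2z-R_0z$, estimate the cross term) it reuses verbatim, and which is the form that generalizes when the principal part has higher order. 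All the delicate points -- the sign of the boundary term at $x=\ell$, the vanishing at $x=0$, the uniform lower bound on $-\pp_xd$ over $\overline{\Omega}$, and the Cauchy--Schwarz treatment of the kernel term producing only an $O(1)$ multiple of $\int|z|^2$ -- are handled correctly in your write-up.
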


\begin{proof}
Hereafter we let $C$ denote generic constants which do not depend on 
$s$ but may depend on $\lambda$. 

Let us set $\widetilde{w}=we^{s\alpha_\delta(\cdot,t_0)}$ and define 
$\widetilde{P}$ by
\[
\widetilde{P}\widetilde{w}
=e^{s\alpha_\delta(\cdot,t_0)}\pp_x\left(\widetilde{w}
e^{-s\alpha_\delta(\cdot,t_0)}\right).
\]
Then we have
\[
\widetilde{P}\widetilde{w}
=\pp_x\widetilde{w}-s\lambda\va_{\delta}(\cdot,t_0)(\pp_xd)\widetilde{w}.
\]
Taking $L^2$-norm for $\widetilde{P}\widetilde{w}$, we obtain
\ba
&&
\left\|\widetilde{P}\widetilde{w}\right\|_{L^2(\Omega\times V)}^2
=
\|\pp_x\widetilde{w}\|_{L^2(\Omega\times V)}^2
+\|s\lambda\va_{\delta}(\cdot,t_0)(\pp_x d)\widetilde{w}
\|_{L^2(\Omega\times V)}^2
\\
&&
-2\int_{Q\times V}\left(\pp_x\widetilde{w}\right)
\left(s\lambda\va_{\delta}(\cdot,t_0)(\pp_xd)\widetilde{w}\right)\,dxdv
\\
&&
\ge
C\int_{\Omega\times V}\left(|\pp_x\widetilde{w}|^2+s^2|\widetilde{w}|^2\right)
\,dxdv
-\int_{\Omega\times V}s\lambda\va_{\delta}(\cdot,t_0)(\pp_xd)\pp_x
|\widetilde{w}|^2\,dxdv
\\
&&
\ge
C\int_{\Omega\times V}\left(|\pp_x\widetilde{w}|^2+s^2|\widetilde{w}|^2\right)
\,dxdv
-C\int_{\Omega\times V}s|\widetilde{w}|^2\,dxdv,
\ea
where we could drop the boundary term which arose from integration by parts 
because $\pp_x d <0$ in $\overline{\Omega}$ and $w(0,\cdot)=0$ in $V$. 
Hence we have
\be
\int_{\Omega\times V}\left(|\pp_x\widetilde{w}|^2+s^2|\widetilde{w}|^2\right)
\,dxdv
\le
C\left\|\widetilde{P}\widetilde{w}\right\|_{L^2(\Omega\times V)}^2
+\int_{\Omega\times V}s|\widetilde{w}|^2\,dxdv.
\label{lem04}
\ee
Taking sufficiently large $s>0$, we may absorb the second term on the 
right-hand side of (\ref{lem04}) and we have
\[
\int_{\Omega\times V}\left(|\pp_x\widetilde{w}|^2+s^2|\widetilde{w}|^2\right)
\,dxdv
\le
C\left\|\widetilde{P}\widetilde{w}\right\|_{L^2(\Omega\times V)}^2.
\]
From the above equation for $\tilde{w}$, we arrive at the following inequality for $w$.
\bea
&&
\int_{\Omega\times V} 
\left[\left|\pp_x w (x,v)\right|^2 + s^2 \left|w (x,v)\right|^2\right]
e^{2s\alpha_\delta (x,t_0)}\,dxdv
\le
C\int_{\Omega\times V}\left|\pp_x w\right|^2e^{2s\alpha_\delta(x,t_0)}\,dxdv.
\nonumber \\
\label{lem01}
\eea

Since
\[
|\pp_x w|^2
\le
C|F|^2+C|w|^2+C\left|\int_{V}c(x,v,v^\prime)w(x,v^\prime)\,dv^\prime\right|^2,
\]
we obtain
\bea
&&
\int_{\Omega\times V} 
\left[\left|\pp_x w (x,v)\right|^2 + s^2 \left|w (x,v)\right|^2\right]
e^{2s\alpha_\delta (x,t_0)}\,dxdv
\nonumber \\
&&\le
C\int_{\Omega\times V}\left|F\right|^2e^{2s\alpha_\delta(x,t_0)}\,dxdv+
C\int_{\Omega\times V}\left|w \right|^2e^{2s\alpha_\delta(x,t_0)}\,dxdv
\nonumber \\
&&+
C\int_{\Omega\times V}\left|\int_{V}c(x,v,v^\prime)w(x,v^\prime)\,dv^\prime
\right|^2e^{2s\alpha_\delta(x,t_0)}\,dxdv
\nonumber \\
&&\le
C\int_{\Omega\times V}\left|F\right|^2e^{2s\alpha_\delta(x,t_0)}\,dxdv+
C\int_{\Omega\times V}\left|w \right|^2e^{2s\alpha_\delta(x,t_0)}\,dxdv,
\label{lem03}
\eea
where we noted that $c\in L^\infty(\Omega\times V\times V)$ and used the fact that, by the Schwarz inequality,
\[
\int_{\Omega\times V}\left|\int_{V} c(x,v,v^\prime)w(x,v^\prime)\,dv^\prime
\right|^2e^{2s\alpha_\delta(x,t_0)}\,dxdv
\le
C\int_{\Omega\times V}\left|w\right|^2e^{2s\alpha_\delta(x,t_0)}\,dxdv.
\]
Taking sufficiently large $s>0$, we can absorb the second term on the right-hand side of (\ref{lem03}) to the left-hand side. Thus we obtain the estimate in Lemma \ref{lemmace1}.
\end{proof}

\end{document}